\documentclass[12pt]{article}
\usepackage{amssymb,amsfonts,amsmath,amsthm,color}
\usepackage{color}
\usepackage[colorlinks = true,
linkcolor = red,
urlcolor  = red,
citecolor = red,
anchorcolor = red]{hyperref}
\usepackage{indentfirst}

\newtheorem{Assumption}{Assumption}[section]
\newtheorem{theorem}{Theorem}[section]
\newtheorem{lemma}[theorem]{Lemma}
\newtheorem{proposition}[theorem]{Proposition}

\newtheorem{definition}{Definition}[section]

\numberwithin{equation}{section}

\begin{document}
	
	\title{\bf Dynamics of Fractional Wave Equations with Nonlocal Damping}
	
	\author{\small\bf Raúl E. Vidal\thanks{E-mail: \texttt{raul.vidal@unc.edu.ar}}\\
		{\small FAMAF, 
			Universidad Nacional de C\'ordoba,
			CIEM (CONICET) 
		}\\
		{\small 5000 C\'ordoba, Argentina.}\medskip\\
		{\small {\bf Vando Narciso}\thanks{Corresponding author. E-mail: \texttt{vnarciso@uems.br}. Research supported by FUNDECT/CNPq Grant No.~15/2024 and by the UEMS International Mobility Program (Call No.~42/2025--DRI/UEMS).}} \\
		{\small Center of Exact and Technological Sciences,} 
		{\small State University of Mato Grosso do Sul }\\
		{\small 79804-970 Dourados, MS, Brazil.} }
	\date{}
	\maketitle
	
	\begin{abstract}
		In this work we study a fractional wave equation with Balakrishnan--Taylor type damping posed on a bounded domain $\Omega\subset\mathbb{R}^n$. The model couples the fractional Laplacian with a nonlinear damping coefficient depending on the fractional energy, leading to a doubly nonlocal evolution equation that extends the classical wave equation with energy-dependent dissipation. By means of the theory of linear operators, we establish the global well-posedness of both mild and regular solutions. We then investigate the long-time dynamics of the associated semigroup and prove that it is gradient and asymptotically smooth. As a consequence, we establish the existence of a compact global attractor and show that it coincides with the unstable manifold of the set of stationary solutions. To the best of our knowledge, this provides the first characterization of the asymptotic dynamics for fractional wave equations with Balakrishnan--Taylor type damping.
	\end{abstract}

	\noindent{\textbf{Keywords:} Fractional wave equation; nonlocal damping; global attractor; long-time dynamics.}
	
	\smallskip
	
	\noindent{\textbf{MSC 2020:} 35L75, 35R11, 37L05, 37L30.}
	

	\section{Introduction}
	In this paper we consider the following fractional wave equation with nonlocal energy-dependent damping:
	\begin{equation}\label{P}
		\begin{cases}
			u_{tt}+(-\Delta)^s u
			+\gamma\bigl([u]_s^2+\|u_t\|_{L^2(\Omega)}^2\bigr)^q u_t
			+f(u)=h(x),
			& \text{in }\Omega\times(0,\infty),\\[0.4em]
			u=0,
			& \text{on }\partial \Omega\times(0,\infty),\\[0.4em]
			u(x,0)=u_0(x),\qquad
			u_t(x,0)=u_1(x),
			& \text{in }\Omega.
		\end{cases}
	\end{equation}
	where $\Omega\subset\mathbb{R}^n$ is a bounded Lipschitz domain,
	$s\in(0,1)$, $\gamma>0$, and $q\ge1/2$. The function
	$h\in L^2(\Omega)$ denotes a time-independent external force, while
	$f:\mathbb{R}\to\mathbb{R}$ is a nonlinear source satisfying suitable growth assumptions specified below. Moreover, $(-\Delta)^s$ denotes the fractional Laplacian of order $s$, defined by
	\begin{align}\label{deffrc}
		(-\Delta)^s u(x)
		=
		2\,
		\mathrm{P.V.}
		\int_{\Omega} 
		\frac{u(x)-u(y)}
		{|x-y|^{n+2s}}
		\,dy.
	\end{align}
	
	The fractional Laplacian is a prime example of a non-local operator that accounts for long-range spatial interactions. Long-jump random walks, often referred to as Lévy flights, are characterized by jump length probability distributions governing the diffusing medium or particles. The singular integral representation in \eqref{deffrc} can be derived as the continuum limit of these discrete, long-jump random walks (see, e.g., \cite{B17, M16, V09} for details). In particular, the fractional Laplacian operator defined in \eqref{deffrc} acts as the infinitesimal generator of a symmetric $2s$-stable Lévy stochastic process. As such, it naturally arises in the modeling of non-local phenomena across diverse fields, ranging from biology to mathematical finance.
	
	Accordingly, numerous studies have investigated models involving so-called anomalous diffusion modeled by the fractional Laplacian. The nonlocal nature of this operator has proven advantageous—and often supported by empirical evidence—in applications where its local counterpart falls short. A wide variety of equations incorporating the fractional Laplacian have been proposed and analyzed across diverse physical and biological domains. These include applications in acoustics and compressional wave propagation \cite{HS10, ZH14}, absorption and dispersion in viscoelastic solids \cite{TC14}, lossy media \cite{CH04}, turbulent motion \cite{DSU08}, phase transitions \cite{SV09}, quasi-geostrophic flow \cite{CV-10}, mathematical finance \cite{S07}, water wave dynamics \cite{BV16}, and nonlinear porous medium equations \cite{CV10, V12}. This list is by no means exhaustive, and further background on anomalous diffusion and fractional dynamics can be found in \cite{BRS16, BH04, MK00, P18}.
	
	In their seminal work \cite{CS07}, Caffarelli and Silvestre demonstrated that the fractional Laplacian operator can be interpreted as a Dirichlet-to-Neumann map, thereby relating this nonlocal operator to a local one in an extended space. This characterization unlocked the use of local PDE techniques to establish various regularity results, laying a solid framework that propelled the study of fractional Laplacian problems. Extending this idea to evolutionary problems, Kemppainen,   Sjögren, and Torrea, in \cite{KST15}, showed that the fractional Laplacian can also be realized as a Dirichlet-to-Neumann operator for a degenerate hyperbolic equation specifically, a wave equation subject to a diffusion term that blows up at the initial time. Through this approach, they established wave-type results analogous to the elliptic extensions in \cite{CS07}.
	
	Regarding works that study wave equations involving the fractional Laplacian, several notable contributions should be mentioned. D'Abbicco and Ebert \cite{AE17} investigated the global existence of solutions for semilinear evolution equations with fractional structural damping, where the fractional Laplacian appears in the first-order time derivative. For Kirchhoff-type fractional wave problems, Pan et al. \cite{PPXZ19} analyzed a model with nonlinear damping and source terms; by combining the Galerkin method with potential well theory, they established global existence, vacuum isolation, asymptotic behavior, and finite-time blow-up of solutions. In a similar framework, Lin et al. \cite{LTXZ20} focused on the blow-up behavior and estimates for the blow-up time under arbitrary positive initial energy. Furthermore, Xiang and Hu \cite{XH21} addressed a viscoelastic fractional wave equation of Kirchhoff type, proving the local and global existence of solutions via Galerkin approximations and investigating global nonexistence through blow-up analysis. Finally, Yang and Zhou \cite{YZ22} studied the long-time behavior of degenerate fractional Kirchhoff wave equations with structural or strong damping, proving the existence of global attractors and the asymptotic compactness of solutions in fractional energy spaces.
	
	Within the broad class of dissipative dynamical systems, wave equations with nonlinear damping have received considerable attention because of their relevance to physical models and the analytical difficulties arising in their mathematical treatment. In this framework, damping mechanisms depending on the total energy of the system---commonly referred to as nonlocal or Krasovskii-type damping---have become an active topic of research. Such a feedback law was originally introduced by Krasovskii~\cite{Krasovskii} for ordinary differential equations. A representative example is
	\begin{align*}
		\ddot{x}(t)+x(t)+\kappa(E(t))\dot{x}(t)=0,
	\end{align*}
	where
	$
	E(t)=\frac12\bigl(x^2(t)+\dot{x}^2(t)\bigr)
	$
	denotes the total mechanical energy. Subsequently, Balakrishnan~\cite{Balakrishnan} and Balakrishnan--Taylor~\cite{Balakrishnan-Taylor} incorporated this type of damping into models arising in flight dynamics and structural engineering. In the Balakrishnan--Taylor framework, the damping coefficient is typically chosen in the form $\kappa(s)=\gamma s^q,$ $\gamma,q>0.$

	An abstract infinite-dimensional formulation of the Krasovskii model was later proposed by Chueshov~\cite[Section 5.3.3]{chueshov-2015} in a separable Hilbert space $H$. More precisely, the evolution problem
	\begin{align}\label{model-chueshov}
		u_{tt}+Au+\kappa(\|A^{1/2}u\|^2+\|u_t\|^2)u_t=0,
		\quad
		u(0)=u_0,\;\; u_t(0)=u_1,
	\end{align}
	was considered, where $A$ is a positive self-adjoint operator with compact resolvent, while $\kappa$ is assumed to be a bounded Lipschitz continuous function on $\mathbb{R}^{+}$ satisfying $\kappa(1)=0$ and strictly increasing for $s>1$. The nonlocal character of the damping, together with its dependence on the instantaneous energy level, creates substantial obstacles in the investigation of the asymptotic behavior of solutions. In particular, the nonlinear dependence of the damping coefficient on the energy makes it difficult to estimate differences of trajectories and to establish asymptotic compactness in the natural phase space. Under these assumptions, Chueshov proved the existence of a \emph{noncompact} global attractor.
	
	The compactness problem was later resolved for a plate equation with Balakrishnan--Taylor type damping. By combining new nonlinear multiplier estimates with the approach introduced by Chueshov and Lasiecka~\cite{chueshov-white}, Gomes \emph{et al.}~\cite{GJLN} established the asymptotic compactness of the associated semigroup and consequently obtained the existence of a \emph{compact} global attractor for a class of Krasovskii-type damping mechanisms. This direction was further pursued by Narciso~\cite{Narciso-Nodea}, who investigated problem \eqref{model-chueshov} with a power-type damping coefficient and an additional source term. Besides proving the existence of a compact global attractor, the author derived estimates for its Kolmogorov $\varepsilon$-entropy. Moreover, assuming suitable nondegeneracy conditions on the damping coefficient, finite-dimensionality and additional regularity properties of the attractor were established. More recently, Zhou and Yang~\cite{Zhou-Yang-2026} generalized these results to a wider family of power-law damping functions.
	
	Nonlocal energy-dependent damping has also been investigated for wave equations. In a bounded domain of $\mathbb{R}^3$, Tang \emph{et al.}~\cite{TYXZ-DCDS} proved the existence of a finite-dimensional global attractor for a wave equation with sublinear Balakrishnan--Taylor damping. The corresponding superlinear case was analyzed by Bezerra \emph{et al.}~\cite{BJN-AA} in the nonautonomous framework, where the existence of a pullback attractor was established through nonlinear multiplier techniques.
	
	For autonomous wave equations posed on the whole space $\mathbb{R}^n$, Freitas and Narciso~\cite{Freitas-narciso} considered a model with superlinear Krasovskii-type damping. They proved the existence of a compact global attractor with respect to the weak topology of
	$
	\mathcal{H}=H^1(\mathbb{R}^n)\times L^2(\mathbb{R}^n),
	$
	by establishing dissipativity together with asymptotic compactness of the corresponding dynamical system. Since Sobolev embeddings fail to be compact in unbounded domains, the proof relied on a compensated compactness argument combined with suitable uniform estimates on the tails of solutions. In addition, the authors obtained the upper semicontinuity of the family of global attractors with respect to the damping parameter.
	
	Motivated by recent developments on dissipative wave equations with energy-dependent damping, we investigate the role of this nonlinear dissipation mechanism in the fractional wave equation \eqref{P}. The coupling of the fractional Laplacian with a Balakrishnan--Taylor type damping gives rise to a doubly nonlocal evolution equation that naturally extends the classical wave model. Moreover, the limiting case $s\to1$ formally recovers the standard wave equation with homogeneous Dirichlet boundary conditions.

	\bigskip
	The main contributions of this paper are summarized as follows.
	
	\begin{itemize}
		\item[(a)] Our first result concerns the well-posedness of problem \eqref{P}. By applying the semigroup theory of linear operators developed by Pazy~\cite{Pazy}, we establish the global existence and uniqueness of both mild (weak) and regular solutions; see Theorem~\ref{theo-existence}.
		
		\item[(b)] Our second and main result concerns the long-time behavior of solutions. We prove that the dynamical system $(\mathcal{H}_s,S_t)$ generated by the mild solutions of \eqref{P} is a gradient dynamical system and is asymptotically smooth. We further show that the set $\mathcal{N}$ of stationary points is bounded. Consequently, by applying the abstract theory of infinite-dimensional dynamical systems developed by Chueshov and Lasiecka~\cite{chueshov-white,chueshov-yellow}, we establish the existence of a compact global attractor $\mathfrak{A}$ in the phase space $\mathcal{H}_s=H^s_0(\Omega)\times L^2(\Omega)$. Moreover, we prove that
		$$\mathfrak{A}=\mathrm{M}^u(\mathcal N),$$
		that is, the global attractor coincides with the unstable manifold of the set of stationary points; see Theorem~\ref{theo-main}.
	\end{itemize}
	
	To the best of our knowledge, this is the first work addressing the long-time dynamics of fractional wave equations with Balakrishnan--Taylor type energy-dependent damping. In particular, we establish the existence of a compact global attractor and characterize it as the unstable manifold of the set of stationary solutions.
	
	\section{Well-posedness}
	\subsection{Functional Spaces and Assumptions}

	\begin{definition} Let $s \in (0,1)$ and let
		\[
		H^{s}(\Omega):=\left\{u \in L^2(\Omega) : \int_{\Omega}\int_{\Omega}\frac{|u(x)-u(y)|^2}{|x-y|^{n+2s}} d x d y < \infty\right\}\]
		be the fractional Sobolev space endowed with the norm 
		\[\|u\|_{H^s(\Omega)}=(\|u\|_{L^2(\Omega)}^2+[u]_{s}^2)^{1/2},\]
		where \[[u]^2_{s}:=\int_{\Omega}\int_{\Omega}\frac{|u(x)-u(y)|^2}{|x-y|^{n+2s}}d xd y,\]
		is a Galiardo seminorm and for every $1\leqslant q \leqslant \infty$, $\|\cdot \|_{L^q(\Omega)}$ is the norm in $L^q(\Omega)$. For notational simplicity, throughout the remainder of this paper, we use the convention
			$\|\cdot\|_{L^q(\Omega)}=\|\cdot\|_q$ and, in particular,
			$\|\cdot\|_{L^2(\Omega)}=\|\cdot\|$.
	\end{definition}
	With this norm, $H^{s}(\Omega)$ is a Hilbert space.  Let $H^s_0(\Omega)$ denote the closure of $C^\infty_0(\Omega)$ in the norm $\|u\|_{H^s(\Omega)}$,     
	It is well known that there exists a constant $K=K(n,s)$ such that for all $u\in H^s_0$, 
	\begin{align}\label{poincare}
		\|u\|\leq K [u]_s.
	\end{align} 
	Consequently, $H^s_0(\Omega)$ can be equipped with the equivalent norm $\|u\|_{H^s_0}=[u]_{s}$, (see Theorem 6.5 and Theorem 6.7 in \cite{DPV}).
	
	When $\Omega=\mathbb{R}^n$, there exists a normalization constant $c(n,s)$ such that, for every $u$ in the Schwartz space $\mathcal{S}(\mathbb{R}^n)$, 
	$$
	(-\Delta)^su(x)=c(n,s)\mathcal{F}^{-1}(|\xi|^{2s}\mathcal{F}u)(x), \qquad \forall x\in \mathbb{R}^n,
	$$
	where $\mathcal{F}$ denotes the Fourier transform and $\mathcal{F}^{-1}$ its inverse, (See Proposition 3.3 in \cite{DPV}). Furthermore, for $u \in \mathcal{S}(\mathbb{R}^n)$, the fractional Laplacian recovers the classical operators in the asymptotic limits $s\to 1^-$ and $s\to 0^+$  (see Proposition 4.4 in \cite{DPV})
	$$
	\lim_{s\to 1^-} (-\Delta )^su(x)=-c(n)\Delta u \quad \text{and} \quad  \lim_{s\to 0^+} (-\Delta )^su(x)=\tilde c(n)  u, 
	$$
	where $\lim_{s \to 1^-} c(n,s) = c(n)$ and $\lim_{s \to 0^+} c(n,s) = \tilde{c}(n)$.
	
	Analogously, if $\Omega\subset\mathbb{R}^n$ is a bounded Lipschitz domain,  the asymptotic behavior of the Gagliardo seminorm for $u\in H^s(\Omega)$ satisfies
	$$
	\lim_{s\to 1^-}(1-s)[u]_s^2=c(n) \|\nabla u\|^2 \quad \text{and} \quad \lim_{s\to 0^+}s[u]_s^2=\tilde c(n) \|u\|^2,
	$$
	for suitable dimensional constants $c(n)$ and $\tilde c(n)$, (see \cite{BBM}).
	
	The study of problem \eqref{P} will be carried out in its natural phase
	space
	$
	\mathcal{H}_s=H_0^s(\Omega)\times L^2(\Omega),
	$
	endowed with the usual inner product
	$$
	\left\langle (u,v),(z,w)\right\rangle_{\mathcal{H}_s}
	=(u,z)_{H_0^s(\Omega)}+(v,w),
	$$
	and the corresponding norm
	$$
	\|(u,v)\|_{\mathcal{H}_s}^{2}
	=\|u\|_{H_0^s(\Omega)}^{2}
	+\|v\|^{2}.
	$$
	It is also known that (see propositions 2.1 and 2.2 in \cite{DPV})
	$$ \|u\|_{H^{s_1}_0(\Omega)} \leq c(n,s_1,s_2)\|u\|_{H^{s_2}_0(\Omega)}, \qquad \mbox{if}\quad 0<s_1\le s_2\leq 1, $$
	where $\|u\|_{H^1_0(\Omega)}=\|\nabla u\|$.
	
	The total energy associated with problem (\ref{P}) is defined by
	\begin{eqnarray}\label{energy-formula}
		E(t)
		=
		\frac{1}{2}\|u(t)\|_{H^s_0(\Omega)}^2+\frac{1}{2}\|u_t(t)\|^2
		+
		\int_{\Omega}F(u(t))\,dx-\int_{\Omega}hudx,
	\end{eqnarray}
	where
	$
	F(r)=\int_{0}^{r}f(\xi)\,d\xi.
	$
	
	The well-posedness of problem \eqref{P} will be established under the
	following assumptions on the nonlinearity $f$.
	
	\begin{Assumption}\label{Assumption}
		Let $f\in C^1(\mathbb{R})$. There exist positive constants
		$C_{f'}$, $C_f$, and
		$c_f\in\left(0,\frac{1}{K^2}\right)$, where $K$ denotes the Poincaré
		constant introduced in \eqref{poincare}, such that
		\begin{equation}\label{assumption_f'}
			|f'(u)|
			\le
			C_{f'}\bigl(1+|u|^{p}\bigr),
			\qquad
			u\in\mathbb{R},
		\end{equation}
		and
		\begin{equation}\label{assumption_f}
			-C_f-\frac{c_f}{2}|u|^2
			\le
			F(u)
			\le
			f(u)u+\frac{c_f}{4}|u|^2,
			\qquad
			u\in\mathbb{R}.
		\end{equation}
		with growth exponent $p$ such that
		\begin{eqnarray}\label{hip-p}
			0<p\le \frac{2s}{n-2s} 
			\quad\mbox{if}\quad n>2s,
			\qquad\mbox{and}\qquad
			p>0
			\quad\mbox{if}\quad n\le 2s.
		\end{eqnarray}
		Condition \eqref{hip-p} implies that, $2(p+1)\leq 2^*_s:=\frac{2n}{n-2s}$ and
		\begin{align*}		H^s_0\hookrightarrow L^{2(p+1)}(\Omega).
		\end{align*}
		(see Theorem 6.5 in \cite{DPV}). 
		
		For convenience, we introduce the constant
		\begin{align*}
			\kappa:=1-c_fK^2>0,
		\end{align*}
		which will be used throughout the paper.

	\end{Assumption}

	\medskip
	\subsection{Cauchy Problem}
	To establish the well-posedness of problem \eqref{P}, we reformulate it as an equivalent first-order evolution equation in the energy space $\mathcal{H}_s$. This formulation enables us to apply the semigroup theory developed by Pazy \cite{Pazy}. Setting $U=(u,v)$ with $v=u_t$, problem \eqref{P} can be rewritten as the following equivalent first-order Cauchy problem:
	\begin{eqnarray}\label{abstrac-cauchy}
		\left\{\begin{array}{l}
			U_t ={A}U +{B}(U), \quad t>0, \medskip \\
			U(0)=(u_0,u_1):=U_0,
		\end{array}\right.
	\end{eqnarray}
	where   ${A}:D(A)\subset\mathcal{H}\to\mathcal{H}$ is a linear  operator defined by
	\begin{equation}\label{def_A}
		AU = \left(\begin{array}{l}
			v \\ -(-\Delta)^s u
		\end{array}\right)^{\perp}, \quad    U\in D(A)= \left\{U\in\mathcal{H}\left|\begin{array}{l}v\in H^s_0(\Omega),\\ (-\Delta)^s u\in L^2(\Omega)\end{array}\right.\right\},
	\end{equation}
	and $B:\mathcal{H}\to\mathcal{H}$ is the nonlinear operator
	\begin{align*}
		B(U)= \left(\begin{array}{l}
			0 \\  -\gamma\|U\|^{2q}_{\mathcal{H}_s}v-f(u)+h
		\end{array}\right)^{\perp}, \quad U=(u,v)\in\mathcal{H}.
	\end{align*}
	
	We are now in a position to state the well-posedness result for the abstract Cauchy problem \eqref{abstrac-cauchy}, and hence for problem \eqref{P}.
	\subsection{Global Existence}
	\begin{definition}\rm
		Let $T>0$ and $U_0\in\mathcal H_s$.
		
		\begin{itemize}
			\item A function
			$
			U\in C([0,T),\mathcal H_s)
			$
			is called a \textbf{mild solution} of problem \eqref{abstrac-cauchy} on $[0,T)$ if it satisfies the integral equation
			\begin{equation}\label{integral-equation}
				U(t)
				=
				e^{At}U_0
				+
				\int_0^t
				e^{A(t-s)}B(U(s))\,ds,
				\qquad
				t\in[0,T).
			\end{equation}
			
			\item A function
			$
			U:[0,T)\rightarrow\mathcal H_s
			$
			is called a \textbf{strong solution} (regular) of problem \eqref{abstrac-cauchy} on $[0,T)$, if $U$ is continuous on $[0,T)$, continuously differentiable on $(0,T)$, with $U\in D(A)$ for $t\in(0,T)$, and \eqref{abstrac-cauchy} is satisfied on $[0,T)$ almost everywhere.
			
		\end{itemize}
	\end{definition}
	
	We are now ready to state the main existence result for the abstract Cauchy problem \eqref{abstrac-cauchy}, which is equivalent to problem \eqref{P}.
	\begin{theorem} 
		\label{theo-existence} Under the {Assumption \ref{Assumption}} with $h\in L^2(\Omega)$, it holds the following statements:
		\begin{itemize}
			\item[{\bf(I)}]   If $U_0\in \mathcal{H}_s$, then there exists
			$T_{\max}>0$  such that problem \eqref{abstrac-cauchy} has a unique mild
			solution $U\in C([0,T_{\max}),\mathcal{H}_s)$.
			\item[{\bf(II)}]   If  $U_0\in D({A})$, then the above mild solution $U$ is regular one.
			\item[{\bf(III)}] In both cases, we have that $T_{\max}=+\infty$.
		\end{itemize}
	\end{theorem}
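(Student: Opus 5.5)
We follow Pazy's scheme for semilinear equations $U_t=AU+B(U)$. The plan is: prove $A$ generates a $C_0$ semigroup of contractions on $\mathcal H_s$, prove $B$ is locally Lipschitz on $\mathcal H_s$, deduce (I) and (II) from the abstract theorems (Pazy, Theorem 6.1.4 and Theorem 6.1.5), and obtain (III) from an a priori energy bound combined with the blow-up alternative.

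\textbf{Step 1: $A$ is maximal dissipative.} Let $\mathcal L$ be the realization of $(-\Delta)^s$ associated with the closed, coercive form $(u,z)_{H^s_0}$ on $L^2(\Omega)$; this form is coercive by \eqref{poincare}. Then $\mathcal L$ is self-adjoint and positive, $D(\mathcal L^{1/2})=H^s_0(\Omega)$, and $D(A)=D(\mathcal L)\times H^s_0(\Omega)$. For $U\in D(A)$,
$$\langle AU,U\rangle_{\mathcal H_s}=(v,u)_{H^s_0}-(\mathcal L u,v)=0 .$$
So $A$ is skew-symmetric and in particular dissipative. For maximality we solve $U-AU=F$ with $F=(f_1,f_2)\in\mathcal H_s$. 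Eliminating $v=u-f_1$ gives
$$u+\mathcal L u=f_1+f_2\in L^2(\Omega).$$
By Lax--Milgram on $H^s_0$ this has a unique solution $u\in H^s_0$ with $\mathcal Lu\in L^2$, and then $v=u-f_1\in H^s_0$. The domain is dense, so Lumer--Phillips shows that $A$ generates a contraction semigroup; in fact it generates a unitary group.

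\textbf{Step 2: $B$ is locally Lipschitz on bounded sets of $\mathcal H_s$.} Take $\|U_i\|_{\mathcal H_s}\le R$. The damping term satisfies
$$\bigl\|\,\|U_1\|^{2q}v_1-\|U_2\|^{2q}v_2\,\bigr\|\le R^{2q}\|v_1-v_2\|+R\,\bigl|\|U_1\|^{2q}-\|U_2\|^{2q}\bigr| .$$
Since $q\ge 1/2$, the map $r\mapsto r^{2q}$ is $C^1$ with derivative bounded by $2qR^{2q-1}$ on $[0,R]$. This gives a Lipschitz bound $C(R)\|U_1-U_2\|_{\mathcal H_s}$, and this is exactly where $q\ge1/2$ enters. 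For the source term we use \eqref{assumption_f'}, the mean value theorem and Hölder with exponents $\frac{p}{2(p+1)}+\frac1{2(p+1)}=\frac12$:
$$\|f(u_1)-f(u_2)\|\le C\bigl(1+\|u_1\|_{2(p+1)}^p+\|u_2\|_{2(p+1)}^p\bigr)\|u_1-u_2\|_{2(p+1)}\le C(R)\,[u_1-u_2]_s ,$$
where the last inequality uses $H^s_0\hookrightarrow L^{2(p+1)}$. Pazy's Theorem 6.1.4 now gives (I), namely a unique mild solution on a maximal interval $[0,T_{\max})$ with $\|U(t)\|_{\mathcal H_s}\to\infty$ as $t\uparrow T_{\max}$ if $T_{\max}<\infty$.

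\textbf{Step 3: regularity.} For (II) we invoke Pazy's Theorem 6.1.5. Since $\mathcal H_s$ is a Hilbert space, hence reflexive, and $B$ is locally Lipschitz, a mild solution with $U_0\in D(A)$ is Lipschitz in time. It is therefore differentiable a.e. and is a strong solution.

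\textbf{Step 4: global existence.} This is the main obstacle, because it needs a uniform bound from the energy. For regular solutions we multiply by $u_t$ and obtain
$$\frac{d}{dt}E(t)=-\gamma\|U(t)\|_{\mathcal H_s}^{2q}\|u_t\|^2\le 0 .$$
For mild solutions we pass to the limit by density of $D(A)$ together with the continuous dependence that follows from Step 2, which gives $E(t)\le E(0)$. We then need coercivity of $E$. By \eqref{assumption_f} and \eqref{poincare},
$$\int_\Omega F(u)\ge -C_f|\Omega|-\frac{c_fK^2}{2}[u]_s^2 .$$
Combining this with $\int_\Omega hu\le \frac{\kappa}{4}[u]_s^2+C\|h\|^2$ yields
$$E(t)\ge \frac{\kappa}{4}\|U(t)\|_{\mathcal H_s}^2-C .$$
In the other direction, $E(0)\le C(\|U_0\|_{\mathcal H_s})$ because $|F(u)|\le C(|u|^2+|u|^{p+2})$ and the embedding into $L^{p+2}$ holds. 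Hence $\|U(t)\|_{\mathcal H_s}$ remains bounded on $[0,T_{\max})$, and the blow-up alternative forces $T_{\max}=\infty$. The delicate point is justifying the energy identity for mild solutions via approximation. I would do this by approximating $U_0$ in $D(A)$ and using continuous dependence on any common interval of existence.
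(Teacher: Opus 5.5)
Your proposal is correct and follows the paper's proof essentially step for step (Lumer--Phillips via Lax--Milgram, local Lipschitz bounds for $B$, Pazy's local theory with the blow-up alternative, and the energy identity plus the coercivity bound $E(t)\ge\frac{\kappa}{4}\|U(t)\|_{\mathcal H_s}^2-\kappa_0$). It is in fact more careful than the paper in realizing $(-\Delta)^s$ through its quadratic form and in extending the energy inequality to mild solutions by density. One point to tidy in Step~3: the reflexive-space Lipschitz argument you give is not Theorem~6.1.5 and only yields a strong solution in Pazy's sense (Lipschitz, a.e.\ differentiable), whereas the paper's definition of a regular solution asks for continuous differentiability on $(0,T)$. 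That $C^1$ regularity comes from Theorem~6.1.5, which needs $B\in C^1(\mathcal H_s;\mathcal H_s)$; this holds here, since $U\mapsto\|U\|_{\mathcal H_s}^{2q}v$ is $C^1$ and $u\mapsto f'(u)$ is continuous from $L^{2(p+1)}(\Omega)$ into $L^{2(p+1)/p}(\Omega)$, but neither your proposal nor the paper checks it.
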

	\begin{proof}
		The proof of Theorem \ref{theo-existence} follows from a standard application of the semigroup theory for linear operators developed by Pazy \cite{Pazy}. \textit{Step 1.} We show that the operator
		$
		A:D(A)\subset \mathcal{H}_s\rightarrow \mathcal{H}_s,
		$
		defined in \eqref{def_A}, is the infinitesimal generator of a contraction $C_0$-semigroup on $\mathcal{H}_s$. This is achieved by proving that $A$ is dissipative and maximal dissipative, and then applying the Lumer--Phillips Theorem (\cite[Theorem 1.4.3]{Pazy}). \textit{Step 2.} We prove that the nonlinear operator
		$
		B:\mathcal{H}_s\rightarrow \mathcal{H}_s
		$
		is locally Lipschitz continuous. Combining \textit{Steps 1} and \textit{2}, the local existence of mild solutions follows from \cite[Theorem 6.1.4]{Pazy}, while the local existence of regular solutions is obtained by applying \cite[Theorem 6.1.5]{Pazy}. \textit{Step 3.} Finally, we derive suitable \emph{a priori} estimates to extend the local solutions globally in time, thereby establishing the existence of global mild and regular solutions.
		
		For the reader's convenience, we provide the details of each step below.
		
		\paragraph*{\it Step 1.} The operator $A$ defined in (\ref{def_A}) is the infinitesimal generator of a $C_0$-semigroup in $\mathcal{H}$. Indeed, we take arbitrary element $U\in D(A)$.  We oberve that
		\begin{align*}(v,u)_{H^s_0(\Omega)}&=\int_{\Omega}\int_{\Omega} \frac{v(x)-v(y)}{|x-y|^{n+2s}}(u(x)-u(y))dydx\\ 
			&=\int_{\Omega}\int_{\Omega} \frac{u(x)-u(y)}{|x-y|^{n+2s}}dy v(x)dx-\int_{\Omega}\int_{\Omega} \frac{u(x)-u(y)}{|x-y|^{n+2s}}v(y)dy dx\\  
			&=\int_{\Omega}2\int_{\Omega} \frac{u(x)-u(y)}{|x-y|^{n+2s}}dy v(x)dx=((-\Delta)^su,v).
		\end{align*}
		Then
		\begin{eqnarray*}
			\left<AU,U\right>_{\mathcal{H}_s}=(v,u)_{H^s_0}+(-(-\Delta)^su,v)=0,
		\end{eqnarray*}
		which proves dissipativity for $A$. To show that $A$ is maximal we need to prove that $R(I-A)=\mathcal{H}$, where $R(I-A)$ is the range of $I-A$. Indeed, let $U^*=(u^*,v^*)\in \mathcal{H}_s$, and consider the equation $(I-A)U=U^*$ which, written in components, reads
		\begin{equation}\label{max_A}
			\left\{
			\begin{array}{rcl}
				u-v &=& u^* \in H_0^s(\Omega),\\[0.3em]
				v+(-\Delta)^s u &=& v^* \in L^2(\Omega).
			\end{array}
			\right.
		\end{equation}
		Substituting $v=u-u^*$ in the second equations of (\ref{max_A}), we obtain
		\begin{align*}
			u+(-\Delta)^s u=v^*+u^*=:w^*\in L^2(\Omega).
		\end{align*}
		Since the corresponding weak formulation is
		$$a(u,w)=\int_{\Omega}w^*wdx,\quad \text{for all } w\in H^s_0(\Omega),$$
		where
		$$a(u,w):=\int_{\Omega}\left[\,uw+(-\Delta)^s u w\,\right]dx,$$
		the bilinear functional $a$ is continuous and coercive, by the Lax-Milgram Theorem we can find a weak solution $u\in H^s_0(\Omega)$ and we get $v=u-u^* \in H^s_0(\Omega)$, and therefor $(-\Delta)^s u=v^*-v\in L^2(\Omega)$. Hence, $R(I-A)=\mathcal{H}_s$, which shows that $A$ is maximal monotone. Consequently, by the Lumer--Phillips Theorem, $A$ generates a contraction $C_0$-semigroup on $\mathcal{H}$.
		
		\paragraph{\it Step 2.}
		The operator $B:\mathcal{H}_s\to\mathcal{H}_s$ defined in
		\eqref{def_A} is locally Lipschitz continuous. Indeed, let $R>0$ and
		let $U^1=(u^1,u_t^1),\,U^2=(u^2,u_t^2)\in\mathcal{H}_s$ satisfy
		\[
		\|U^1\|_{\mathcal{H}_s},\,
		\|U^2\|_{\mathcal{H}_s}\le R.
		\]
		Setting $w=u^1-u^2$, we have
		\begin{align}\label{diff_operator_B}
			\|B(U^1)-B(U^2)\|_{\mathcal H_s}
			&=
			\left\|
			\gamma\left(
			\|U^1\|_{\mathcal H_s}^{2q}u_t^1
			-
			\|U^2\|_{\mathcal H_s}^{2q}u_t^2
			\right)
			-
			\left(f(u^1)-f(u^2)\right)
			\right\|.
		\end{align}
		We estimate each term on the right-hand side separately. Using the identity
		$$
		\|U^1\|_{\mathcal H_s}^{2q}u_t^1
		-
		\|U^2\|_{\mathcal H_s}^{2q}u_t^2
		=
		\|U^1\|_{\mathcal H_s}^{2q}(u_t^1-u_t^2)
		+
		\bigl(
		\|U^1\|_{\mathcal H_s}^{2q}
		-
		\|U^2\|_{\mathcal H_s}^{2q}
		\bigr)u_t^2,
		$$
		we obtain
		$$
		\left\|
		\gamma\|U^1\|_{\mathcal H_s}^{2q}(u_t^1-u_t^2)
		\right\|
		\le
		\gamma R^{2q}\|u_t^1-u_t^2\|
		\le
		\gamma R^{2q}\|U^1-U^2\|_{\mathcal H_s}.
		$$
		Moreover, by the Mean Value Theorem, there exists
		$\theta\in(0,1)$ such that
		\begin{align*}
			\gamma
			\Bigl|
			\|U^1\|_{\mathcal H_s}^{2q}
			-
			\|U^2\|_{\mathcal H_s}^{2q}
			\Bigr|
			&=
			2\gamma q
			\Bigl(
			\theta\|U^1\|_{\mathcal H_s}
			+(1-\theta)\|U^2\|_{\mathcal H_s}
			\Bigr)^{2q-1}
			\Bigl|
			\|U^1\|_{\mathcal H_s}
			-
			\|U^2\|_{\mathcal H_s}
			\Bigr|\\
			&\le
			2^{2q}\gamma q
			R^{2q-1}
			\|U^1-U^2\|_{\mathcal H_s}.
		\end{align*}
		Finally, by the Mean Value Theorem, Assumption \eqref{assumption_f'}, H\"older's inequality with
		$
		\frac{2p}{2(p+1)}
		+\frac{2}{2(p+1)}=1,
		$
		and the continuous embedding
		\(H_0^s(\Omega)\hookrightarrow L^{2(p+1)}(\Omega)\), we obtain
		\begin{eqnarray*}
			\left\|f(u^1)-f(u^2)\right\|
			&\le&
			C
			\left(1+\|u^1\|^{p}_{2(p+1)}+\|u^2\|^{p}_{2(p+1)}\right)
			\|u^1-u^2\|_{2(p+1)}\\
			&\le & C
			\left(1+\|u^1\|^{p}_{H^s_0(\Omega)}+\|u^2\|^{p}_{H^s_0(\Omega)}\right)
			\|u^1-u^2\|_{H^s_0(\Omega)}
			\\
			&\le&
			C_R
			\|U^1-U^2\|_{\mathcal H_s}.
		\end{eqnarray*}
		Thus, returning to \eqref{diff_operator_B}, there exists a constant $C_R>0$ such that
		\begin{align}\label{Lipschitz-cond}
			||B(U^1)-B(U^2)||_{\mathcal{H}_s} \le C_R||U^1-U^2||_{\mathcal{H}_s}.
		\end{align}
		This proves that the operator $B$ is locally Lipschitz.
		
		Since we have already proved in \textit{Step 1} that the operator
		$
		A:D(A)\subset\mathcal H_s\to\mathcal H_s
		$
		is the infinitesimal generator of a $C_0$-semigroup
		$\{e^{tA}\}_{t\ge0}$ on $\mathcal H_s$, and in \textit{Step 2} that the
		nonlinear operator
		$
		F:\mathcal H_s\to\mathcal H_s
		$
		is locally Lipschitz continuous, all the assumptions of the abstract
		existence theorem for semilinear evolution equations are satisfied. Therefore, by applying \cite[Chapter 6, Theorem 1.4]{Pazy}, for every
		initial datum $U_0\in\mathcal H_s$, the Cauchy problem
		\eqref{abstrac-cauchy} admits a unique mild solution
		$
		U\in C([0,T_{\max});\mathcal H_s),
		$
		for some maximal existence time $T_{\max}>0$, satisfying the
		variation-of-constants formula \eqref{integral-equation}. Furthermore, if $U_0\in D(A)$, then by
		\cite[Chapter 6, Theorem 1.5]{Pazy} the mild solution is regular and
		satisfies
		$$
		U\in C([0,T_{\max});D(A))
		\cap C^1([0,T_{\max});\mathcal H_s).
		$$
		This completes the proof of statements \textbf{(I)} and \textbf{(II)} of
		Theorem~\ref{theo-existence}.

		\paragraph{\it Step 3.}
		It remains to check that both mild and regular solutions are globally defined, that is,  $T_{\max}=+\infty.$ Indeed, multiplying the problem (\ref{P}) by $u_t$ and integrating over $\Omega$ we get
		\begin{align}\label{unif-1}
			\int_{\Omega}u_{tt}u_tdx+\int_{\Omega}(-\Delta)^{s}uu_tdx+\gamma\|U\|^{2q}_{\mathcal{H}_s}\int_{\Omega} u_tu_t dx+\int_{\Omega}f(u)u_tdx=\int_{\Omega}hu_tdx.
		\end{align}
		Through direct calculations, it follows from \eqref{unif-1} that
		\begin{equation}\label{der-energy}
			\frac{d}{dt}{E}(t)+\gamma\|U\|^{2q}_{\mathcal{H}_s}\|u_t\|^2=0,
		\end{equation}
		where we remember that $E$ is defined in \eqref{energy-formula}. Equation \eqref{der-energy}
		implies 
		\begin{align}\label{global_0}
			{E}(t)+\gamma\int_0^t\|U\|^{2q}_{\mathcal{H}_s}\|u_t\|^2ds={E}(0),\quad \forall t\in [0,T_{\max}).
		\end{align}
		Now, from assumption (\ref{assumption_f}) and using Poincar\'e inequality (\ref{poincare}), we have
		\begin{align}\label{global_a}
			\int_{\Omega}F(u)dx\ge -\frac{c_f}{2}\|u\|^2-C_f|\Omega|\ge -\frac{c_fK^2}{2}\|u\|^2_{H^s_0(\Omega)}-C_f|\Omega|.
		\end{align}
		On the other hand, from H\"older and Young inequalities, and using again (\ref{poincare}), we have
		\begin{align}\label{global_b}
			\int_{\Omega}hudx\le \|h\|\|u\|\le \|h\|K\|u\|_{H^s_0(\Omega)}\le \frac{K^2}{\kappa}\|h\|^2+\frac{\kappa}{4}\|u\|^2_{H^s_0(\Omega)}.
		\end{align}
		Then, from definition of $E(t)$ and inequalities (\ref{global_a}) and (\ref{global_b}), we obtain that
		\begin{eqnarray}\label{global-in-time1}
			E(t) \geq \frac{1}{2}\|u_t\|^2+\frac{\kappa}{4}\|u\|^2_{H^s_0(\Omega)}-C_f|\Omega|-\frac{K^2}{\kappa}\|h\|^2\ge \frac{\kappa}{4}||U(t)||^2_{\mathcal{H}_s}-\kappa_0,
		\end{eqnarray}
		where $\kappa_0=\left[C_f|\Omega|+\frac{K^2}{\kappa}\|h\|^2\right]$. 
		
		Hence, from (\ref{global-in-time1}), (\ref{global_0}), and using that $\|U\|^{2q}_{\mathcal{H}_s}\|u_t\|^2\ge \|u_t\|^{2(q+1)}$, we obtain
		\begin{eqnarray}\label{global-in-time}
			||U(t)||^2_{\mathcal{H}_s}+\int_0^t\|u_t\|^{2(q+1)}ds\le \frac{E(0)+\kappa_0}{\min\{\kappa/4,\gamma\}},\quad \forall t\in [0,T_{\max}).
		\end{eqnarray}
		Estimate \eqref{global-in-time} implies that every (mild or strong) solution remains uniformly bounded in time. Therefore, by \cite[Theorem 6.1.4]{Pazy}, we conclude that $T_{\max}=+\infty$. This proves item \textbf{(III)} and completes the proof of Theorem~\ref{theo-existence}.
	\end{proof}
	
	\subsection{Continuous Dependence}
	
	\begin{theorem}\label{theo-cont-dependence}
		Let $U^1(t)$ and $U^2(t)$ be two mild (or strong) solutions of problem \eqref{P} corresponding to the initial data $U_0^1$ and $U_0^2$, respectively. Then
		\begin{equation}\label{continuous-dependence}
			\|U^1(t)-U^2(t)\|_{\mathcal H_s}
			\le
			e^{Ct}
			\|U_0^1-U_0^2\|_{\mathcal H_s},
			\qquad
			\forall\, t\in[0,T),
		\end{equation}
		where
		$
		C=C\bigl(\|U_0^1\|_{\mathcal H_s},\|U_0^2\|_{\mathcal H_s}\bigr)>0.
		$
	\end{theorem}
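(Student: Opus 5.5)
We argue with the mild formulation \eqref{integral-equation} combined with the global bound \eqref{global-in-time}, so no energy multiplier for the difference is needed. By \eqref{global-in-time} together with $E(0)\le C(\|U_0\|_{\mathcal H_s})$, there is a radius
\[
R=R\bigl(\|U_0^1\|_{\mathcal H_s},\|U_0^2\|_{\mathcal H_s}\bigr)
\]
such that $\|U^i(t)\|_{\mathcal H_s}\le R$ for all $t\ge0$ and $i=1,2$. To bound $E(0)$ in terms of the data, we use $|F(u)|\le C(|u|^2+|u|^{p+2})$, which follows from \eqref{assumption_f'}, together with the embedding $H^s_0(\Omega)\hookrightarrow L^{2(p+1)}(\Omega)$. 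Since $p+2\le 2(p+1)$ and $\Omega$ is bounded, this gives $\int_\Omega F(u_0)\,dx\le C(\|u_0\|_{H^s_0})$. The term $\int_\Omega hu_0\,dx$ is controlled by \eqref{poincare}.

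Next we subtract the two integral equations \eqref{integral-equation}:
\[
U^1(t)-U^2(t)=e^{At}(U_0^1-U_0^2)+\int_0^t e^{A(t-\tau)}\bigl(B(U^1(\tau))-B(U^2(\tau))\bigr)\,d\tau .
\]
By \emph{Step 1}, $e^{At}$ is a contraction semigroup, so $\|e^{At}\|\le1$. By \eqref{Lipschitz-cond} on the ball of radius $R$, the integrand is bounded in norm by $C_R\|U^1(\tau)-U^2(\tau)\|_{\mathcal H_s}$. Hence
\[
\|U^1(t)-U^2(t)\|_{\mathcal H_s}\le\|U_0^1-U_0^2\|_{\mathcal H_s}+C_R\int_0^t\|U^1(\tau)-U^2(\tau)\|_{\mathcal H_s}\,d\tau .
\]
Gronwall's inequality then yields \eqref{continuous-dependence} with $C=C_R$. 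Strong solutions are in particular mild solutions, so the same argument covers them.

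The main care point is that $C$ must depend only on the initial data, and this is exactly what the uniform-in-time bound \eqref{global-in-time} supplies. A second point is that the Lipschitz estimate in \emph{Step 2} involves $\|U\|_{\mathcal H_s}^{2q-1}$; this is harmless because $q\ge1/2$, so the exponent is nonnegative and the power is bounded by $R^{2q-1}$.

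An alternative route is to multiply the difference equation by $w_t$ and use the same bounds on the damping and $f$ terms. That approach also works, but it requires justification for strong solutions followed by a density argument. The semigroup route avoids this step, so we adopt it.
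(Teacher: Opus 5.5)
Your proposal is correct and follows the paper's route: the variation-of-constants formula, contractivity of $e^{At}$, the local Lipschitz bound \eqref{Lipschitz-cond}, and Gronwall. You also make explicit, via \eqref{global-in-time}, why the radius $R$ (and hence $C$) depends only on the initial data, a point the paper leaves implicit.

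One small slip: $f(0)$ need not vanish under Assumption~\ref{Assumption} (e.g.\ $f\equiv 1$ is admissible), so the pointwise bound should read $|F(u)|\le C(1+|u|^2+|u|^{p+2})$. On the bounded domain $\Omega$ this still gives $E(0)\le C(\|U_0\|_{\mathcal H_s})$.
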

	
	\begin{proof}
		Let
		$
		W(t)=U^1(t)-U^2(t),$ $W_0=U_0^1-U_0^2.$
		Since $U^1$ and $U^2$ are mild solutions, the variation-of-constants formula \eqref{integral-equation} yields
		\begin{equation*}
			W(t)
			=
			e^{At}W_0
			+
			\int_0^t
			e^{A(t-s)}
			\left(
			B(U^1(s))-B(U^2(s))
			\right)\,ds,
			\qquad
			t\in[0,T).
		\end{equation*}
		Since $\{e^{At}\}_{t\ge0}$ is a contraction semigroup on $\mathcal H_s$, it follows from \eqref{Lipschitz-cond} that
		\begin{align*}
			\|W(t)\|_{\mathcal H_s}
			&\le
			\|W_0\|_{\mathcal H_s}
			+
			\int_0^t
			\|B(U^1(s))-B(U^2(s))\|_{\mathcal H_s}\,ds
			\\
			&\le
			\|W_0\|_{\mathcal H_s}
			+
			C
			\int_0^t
			\|W(s)\|_{\mathcal H_s}\,ds,
		\end{align*}
		where $
		C=C\bigl(\|U_0^1\|_{\mathcal H_s},\|U_0^2\|_{\mathcal H_s}\bigr)>0.$
		An application of Gronwall's inequality therefore yields
		\begin{equation*}
			\|W(t)\|_{\mathcal H_s}
			\le
			e^{Ct}
			\|W_0\|_{\mathcal H_s},
			\qquad
			t\in[0,T),
		\end{equation*}
		which is exactly \eqref{continuous-dependence}. The proof is complete.
	\end{proof}
	
	\section{Global attractor}
	\subsection{Preliminary Abstract Results}\setcounter{equation}{0}
	For the sake of completeness, we recall some abstract results on dynamical systems following the books by Chueshov and Lasiecka \cite{chueshov-white,chueshov-yellow} and Chueshov \cite{chueshov-2015}.
	
	\medskip
	We briefly recall the notion of a dynamical system that will be used throughout the paper. A \emph{dynamical system} is a pair $(X,S_t)$, where $X$ is a complete metric space and $\{S_t\}_{t\ge0}$ is a family of continuous mappings from $X$ into itself satisfying
	$$
	S_0=I,\qquad
	S_{(t+s)}=S_t\text{o}S_s, \qquad t,s\ge0,
	$$
	and such that, for every $z\in X$, the map
	$
	t\mapsto S_tz
	$
	is continuous from $[0,\infty)$ into $X$. The family $\{S_t\}_{t\ge0}$ is called the \emph{evolution semigroup} associated with the dynamical system.
	
	\medskip
	Let $(X,S_t)$ be a dynamical system given by a $C_0$-semigroup $S_t$ on a complete metric space $X$. Then a global attractor for $S_t$ is a compact set $\mathcal{A}\subset X$ that is fully invariant and uniformly attracting, that is, $S_t\mathcal{A}=\mathcal{A}$ for all $t\ge 0$ and for every bounded subset $B\subset X$
	$$
	\mbox{dist}(S_tB,\mathcal{A}) = \sup_{x\in S_tB}\inf_{y\in
		\mathcal{A}}d(x,y) \to 0 \quad \mbox{as} \quad t \to \infty.
	$$

	
	\medskip
	A dynamical system $(X,S_t)$ is \textit{asymptotically smooth} in $X$ if for any bounded positive invariant set $B \subset X$, there exists a compact set $K \subset \overline{B}$, such that
	$$
	\mbox{dist}(S_tB,K) = 0 \quad \mbox{as} \quad t \to \infty .
	$$
	
	\medskip
	Let $\mathcal{N}$ be the set of \textit{stationary points} of a dynamical system $(X,S_t)$:
	$$\mathcal{N}=\{v\in X:S_tv=v\;\;\mbox{for all}\;\;t\ge 0\}.$$
	We define the {\it unstable manifold} $\mathrm{M}^u(\mathcal{N})$ emanating from set $\mathcal{N}$ as a set of all $y\in X$ such that there exists a full trajectory $\Upsilon=\{u(t):t\in \mathbb{R}\}$ with the properties
	$$u(0)=y\quad \mbox{and}\quad \lim_{t\rightarrow-\infty}\mbox{dist}_{X}(u(t),\mathcal{N})=0.$$
	
	\medskip
	Let $Y\subseteq X$ be a forward invariant set of a dynamical system
	$(X,S_t)$.
	\begin{itemize}
		\item A continuous functional $\Phi:Y\to\mathbb{R}$ is said to be a
		\emph{Lyapunov function} for the dynamical system $(X,S_t)$ on $Y$ if and only if,
		for every $y\in Y$, the function
		$
		t\mapsto \Phi(S_ty)
		$
		is non-increasing.
		\item The Lyapunov function $\Phi$ is said to be \emph{strict} on $Y$ if
		$
		\Phi(S_ty)=\Phi(y),$ $\forall\, t>0,
		$
		for some $y\in Y$, implies that
		$
		S_ty=y,$ $\forall\, t>0,
		$
		that is, $y$ is a stationary point of $(X,S_t)$.
		
		\item The dynamical system $(X,S_t)$ is said to be \emph{gradient} if and only if
		there exists a strict Lyapunov function for $(X,S_t)$ on the whole phase space
		$X$.
	\end{itemize}

	\medskip
	The following theorem established by Chueschov and Lasiecka in \cite{chueshov-yellow} gives the conditions for the existence of a compact global attractor for gradient and asymptotically smooth systems.
	\begin{theorem}[Corollary 7.5.7, \cite{chueshov-yellow}] \label{theo-chueshov}
		Assume that $(X,S_t)$ is a gradient asymptotically smooth dynamical system with Lyapunov functional denoted by $\Phi$. Assume in addition that,
		\begin{itemize}
			\item[{\bf I.}] $\Phi(U)$ is a bounded from above on any bounded subset of $X$,
			\item[{\bf II.}] the set $\Phi_R=\{U|\Phi(U)\le R\}\}$ is bounded for every $R$,
			\item[{\bf III.}] the set of stationary points $\mathcal{N}$ is bounded.
		\end{itemize}
		Then the system $(X,S_t)$ has a compact global attractor characterized by $\mathfrak{A}=\mathrm{M}^u(\mathcal{N})$.
	\end{theorem}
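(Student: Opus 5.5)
The plan is to reduce the statement to the classical criterion of Hale--Ladyzhenskaya: an asymptotically smooth dynamical system that is point dissipative, and whose positive orbits of bounded sets are bounded, possesses a compact global attractor. Once the attractor $\mathfrak A$ exists, I will identify it with $\mathrm{M}^u(\mathcal N)$ by a LaSalle-type argument driven by the strict Lyapunov function $\Phi$.

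\emph{Step 1 (boundedness of orbits).} Let $B\subset X$ be bounded. By {\bf I} there is $R$ with $\Phi\le R$ on $B$. Since $t\mapsto\Phi(S_ty)$ is non-increasing, $\gamma^+(B)=\bigcup_{t\ge0}S_tB\subset\Phi_R$, and $\Phi_R$ is bounded by {\bf II}. Thus $\gamma^+(B)$ is a bounded, positively invariant set. Asymptotic smoothness then shows that the $\omega$-limit set $\omega(B)$ is nonempty and compact and attracts $B$.

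\emph{Step 2 (point dissipativity via LaSalle).} For $y\in X$, Step 1 gives that $\omega(y)$ is nonempty, compact and invariant. The function $\Phi(S_ty)$ is non-increasing and bounded below, because $\Phi$ is continuous on the compact set $\overline{\gamma^+(y)}$ (the closure of the orbit, which is compact since $\omega(y)$ attracts it). Hence $\Phi(S_ty)\to c$, and by continuity $\Phi\equiv c$ on $\omega(y)$. Invariance of $\omega(y)$ then gives $\Phi(S_tz)=\Phi(z)$ for every $z\in\omega(y)$, so strictness forces $\omega(y)\subset\mathcal N$. Since $\mathcal N$ is bounded by {\bf III}, every trajectory eventually enters the bounded set $\mathcal O_1(\mathcal N)$, its $1$-neighbourhood; this is exactly point dissipativity.

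\emph{Step 3 (existence of the attractor).} This is the step I expect to be the main obstacle: passing from point dissipativity to uniform absorption of bounded sets. I would invoke the standard theorem (Hale; Chueshov--Lasiecka, Theorem 7.2.x): asymptotic smoothness, point dissipativity, and boundedness of $\gamma^+(B)$ for bounded $B$ together yield a compact global attractor. If a self-contained argument is wanted, I would take the compact set $\omega(\mathcal O_1(\mathcal N))$. Compactness of $\omega(B)$ together with point dissipativity lets one cover $\omega(B)$ by finitely many neighbourhoods, each of which enters a fixed neighbourhood of $\mathcal N$ in finite time; continuity of $S_t$ then upgrades this to uniform entry, and the resulting absorbing set yields $\mathfrak A=\omega(\text{absorbing set})$.

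\emph{Step 4 (characterization $\mathfrak A=\mathrm{M}^u(\mathcal N)$).} For $y\in\mathfrak A$, invariance gives a full trajectory $\{u(t)\}_{t\in\mathbb R}\subset\mathfrak A$ with $u(0)=y$. Its $\alpha$-limit set is nonempty, compact and invariant. The function $\Phi(u(t))$ is monotone and bounded on the compact set $\mathfrak A$, so it has a limit as $t\to-\infty$; hence $\Phi$ is constant on the $\alpha$-limit set, and strictness puts that set inside $\mathcal N$. By compactness of $\mathfrak A$ this gives $\mathrm{dist}(u(t),\mathcal N)\to0$ as $t\to-\infty$, so $y\in\mathrm{M}^u(\mathcal N)$. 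Conversely, let $y\in\mathrm{M}^u(\mathcal N)$ with its full trajectory. For $t\le -T$ the points $u(t)$ lie in $\mathcal O_1(\mathcal N)$, where $\Phi\le R'$ by {\bf I}. Since $\Phi$ is non-increasing along the trajectory, the whole trajectory lies in $\Phi_{R'}$, which is bounded by {\bf II}. A bounded full trajectory is contained in the global attractor, because $\mathfrak A$ attracts the bounded set $\{u(t)\}$ and $u(0)=S_\tau u(-\tau)$ for every $\tau\ge0$. Therefore $y\in\mathfrak A$.
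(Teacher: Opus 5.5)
Your proof is correct. The paper gives no argument of its own for this statement; it simply imports Corollary 7.5.7 from Chueshov--Lasiecka. Your reconstruction is the standard Hale--LaSalle argument for gradient systems on which that result rests:
\begin{itemize}
\item bounded positive orbits from {\bf I}--{\bf II};
\item LaSalle's principle together with {\bf III}, giving point dissipativity;
\item Hale's criterion, giving existence of the attractor;
\item $\alpha$-limit sets, giving the characterization.
\end{itemize}
I checked the individual steps (relative compactness of single orbits, positive invariance of $\alpha$-limit sets, the absorbing set $\gamma^+(\mathcal O_1(\mathcal N))$ in your covering sketch, and boundedness of trajectories in $\mathrm{M}^u(\mathcal N)$), and they hold.

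You can bypass the step you flag as the main obstacle altogether. Put $D:=\mathrm{M}^u(\mathcal N)$ and argue as follows.
\begin{itemize}
\item Your Step 4 already shows that $D\subset\Phi_{R'}$ with $R'=\sup_{\mathcal O_1(\mathcal N)}\Phi$, so $D$ is bounded.
\item Your $\alpha$-limit argument never used that the set was the attractor. It therefore shows that every compact invariant set lies in $D$.
\item Shifting full trajectories shows $S_tD=D$, hence $D\subset\omega(D)$.
\item Since $D$ is bounded and positively invariant, asymptotic smoothness makes $\omega(D)$ compact and invariant, so $\omega(D)\subset D$. Thus $D=\omega(D)$ is compact.
\item For any bounded $B$, the set $\omega(B)$ is compact, invariant and attracts $B$. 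Hence $\omega(B)\subset D$, and $D$ attracts $B$.
\end{itemize}
This gives existence and the identity $\mathfrak A=\mathrm{M}^u(\mathcal N)$ in one stroke. It needs neither point dissipativity nor Hale's theorem, at the modest cost of proving directly that $\mathrm{M}^u(\mathcal N)$ is bounded and invariant.
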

	
	\subsection{Main result}
	
	As a consequence of Theorems~\ref{theo-existence} and~\ref{theo-cont-dependence}, problem~\eqref{P} generates a nonlinear dynamical system $(\mathcal H_s,S_t)$, where
	\begin{equation*}
		S_t:\mathcal H_s\to\mathcal H_s,
		\qquad
		S_tU_0=U(t),
		\quad
		t\ge0,
	\end{equation*}
	and $U(t)$ denotes the unique global mild solution of problem~\eqref{P} corresponding to the initial datum $U_0\in\mathcal H_s$. Moreover, the solution semigroup $\{S_t\}_{t\ge0}$ is Lipschitz continuous with respect to the initial data.
	
	The remainder of the paper is devoted to the study of the long-time behavior of the dynamical system $(\mathcal H_s,S_t)$. In particular, we establish the existence of a compact global attractor.
	
	The main result of this paper is stated in the following theorem.
	\begin{theorem}{\bf[Global attractor]}\label{theo-main}
		Assume that the hypotheses of Theorem~\ref{theo-existence} are satisfied with $q\in[1/2,2]$. Then the dynamical system $(\mathcal H_s,S_t)$ generated by problem~\eqref{P} possesses a compact global attractor
		$$
		\mathfrak{A}=\mathrm{M}^u(\mathcal N)
		$$
		in the phase space
		$
		\mathcal H_s=H_0^s(\Omega)\times L^2(\Omega).
		$
	\end{theorem}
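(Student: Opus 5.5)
The plan is to verify the hypotheses of Theorem~\ref{theo-chueshov} with the energy $\Phi(U)=E(t)$ from \eqref{energy-formula} as Lyapunov functional on $X=\mathcal H_s$.

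\emph{Gradient structure.} Continuity of $\Phi$ on $\mathcal H_s$ follows from $H^s_0\hookrightarrow L^{2(p+1)}$ and the growth bound \eqref{assumption_f'}. By \eqref{global_0}, $t\mapsto\Phi(S_tU_0)$ is non-increasing. For strictness, suppose $\Phi(S_tU_0)=\Phi(U_0)$ for all $t>0$. Then $\int_0^t\|U\|_{\mathcal H_s}^{2q}\|u_t\|^2\,ds=0$, and since $\|U\|^{2q}_{\mathcal H_s}\ge\|u_t\|^{2q}$ this gives $u_t\equiv0$. Hence $u(t)\equiv u_0$ and $S_tU_0=U_0$, so $U_0\in\mathcal N$. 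Strong solutions carry the energy identity directly, and it passes to mild solutions by density and Theorem~\ref{theo-cont-dependence}.

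\emph{Conditions I--III.} Condition I follows from $|\int F(u)|\le C(\|u\|^2+\|u\|_{2(p+1)}^{p+2})$ and the embedding. Condition II is \eqref{global-in-time1}: $\Phi(U)\ge\frac{\kappa}{4}\|U\|^2_{\mathcal H_s}-\kappa_0$. For condition III, a stationary point $(u,0)$ satisfies $(-\Delta)^su+f(u)=h$. Testing with $u$ and using $f(u)u\ge F(u)-\frac{c_f}{4}|u|^2\ge -C_f-\frac{3c_f}{4}|u|^2$ together with Poincaré \eqref{poincare} gives $(1-\frac34c_fK^2)\|u\|^2_{H^s_0}\le C_f|\Omega|+K\|h\|\|u\|_{H^s_0}$. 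This bounds $\mathcal N$.

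\emph{Asymptotic smoothness (main obstacle).} I will use the Chueshov--Lasiecka criterion: on a bounded positively invariant set $B$, it suffices to find, for each $\epsilon>0$, a time $T$ with
\[
\|S_TU^1-S_TU^2\|_{\mathcal H_s}\le \epsilon+\Psi_{\epsilon,T}(U^1,U^2),
\]
where $\Psi$ is sequentially contractive. For two trajectories, set $w=u^1-u^2$; it satisfies
\[
w_{tt}+(-\Delta)^sw+\gamma\bigl(\|U^1\|^{2q}u^1_t-\|U^2\|^{2q}u^2_t\bigr)+f(u^1)-f(u^2)=0.
\]
I multiply by $w_t$ and by $\eta w$ and integrate over $[t,T]$ to obtain an observability-type inequality for $E_w(T)$. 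The damping term splits into a monotone part $\gamma\|U^1\|^{2q}\|w_t\|^2\ge0$ and a remainder $(\|U^1\|^{2q}-\|U^2\|^{2q})(u^2_t,w_t)$. This remainder is the crux, since the damping coefficient may degenerate as $\|U\|\to0$ and is not uniformly positive. Following Gomes et al.\ and Narciso, I will bound it by $C_B\int_0^T\|u^1_t\|^{?}\cdots$, using the dissipation integral $\int_0^\infty\|u_t\|^{2(q+1)}<\infty$ from \eqref{global-in-time}. Hölder in time with exponents tied to $q+1$, together with $q\le2$, should make these terms either $\le\epsilon$ for large $T$ or of the form $\int_0^T(u^2_t,w_t)\cdot(\text{bounded})$, which is sequentially contractive by weak-$*$ compactness in time. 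The source terms $\int(f(u^1)-f(u^2))w_t$ are handled by compactness of $H^s_0\hookrightarrow L^{2(p+1)-\delta}$ when $p$ is subcritical. In the critical case, I will write them as $\frac{d}{dt}\int(F(u^1)+F(u^2)-\dots)$ plus a compact remainder, as in standard critical wave arguments.

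Combining these steps, Theorem~\ref{theo-chueshov} yields the compact global attractor $\mathfrak A=\mathrm M^u(\mathcal N)$.
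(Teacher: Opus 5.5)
Your gradient argument and your checks of conditions I--III are fine and essentially coincide with Propositions~\ref{prop-gradient-system}, \ref{Prop-stationary-set} and \ref{Prop-stationary-set-2}. The gap is in asymptotic smoothness, at exactly the step you call the crux. The remainder $\gamma\int_\tau^T(\|U^1\|_{\mathcal H_s}^{2q}-\|U^2\|_{\mathcal H_s}^{2q})(u^2_t,w_t)\,ds$ is never actually estimated (the exponent is left as ``?''), and neither of the two mechanisms you propose works.

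\emph{Hölder in time.} Using $\int_0^\infty\|u^i_t\|^{2(q+1)}dt\le C_B$, Hölder bounds this term only by $C_BT^{q/(q+1)}$. Since it enters $\|W(T)\|^2_{\mathcal H_s}\le\|W(\tau)\|^2_{\mathcal H_s}+\cdots$ for every $\tau$, averaging over $\tau\in(0,T)$ leaves a contribution of order $T^{q/(q+1)}$. That grows with $T$ instead of becoming $\le\epsilon$. The dissipation integral only pays off for $\frac1T\int_0^T\|w_t\|^2\,d\tau$, i.e.\ for the term generated by $\frac1T\int_0^T\|W(\tau)\|^2_{\mathcal H_s}d\tau$ through the multiplier $w$.

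\emph{Weak-$*$ compactness.} A term $\int_0^T(u^2_t,w_t)\cdot(\text{bounded})\,dt$ is not contractive for this reason. Along a sequence of data it contains $(u^m_t,u^n_t-u^m_t)$, a product of two merely weakly convergent sequences. With bounded factor $\equiv1$, its iterated limit is, up to sign and a time weight, the defect $\lim_m\|u^m_t\|^2-\|u_t\|^2$ in $L^2(\Omega\times(0,T))$, which is exactly the missing strong convergence. You also never say where $q\le2$ enters.

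The paper supplies the missing structure as follows. It splits the damping symmetrically as $\tfrac12\Pi_1w_t+\tfrac12\Pi_2(u^1_t+u^2_t)$. The bad term then becomes $\tfrac\gamma2\int\Pi_2(\|u^1_t\|^2-\|u^2_t\|^2)=\tfrac\gamma2\int\Theta\,(\|U^1\|^2_{\mathcal H_s}-\|U^2\|^2_{\mathcal H_s})(\|u^1_t\|^2-\|u^2_t\|^2)$ with $\Theta>0$. Its $u_t$-part is a nonnegative square. The cross part is bounded, via Young and Lemma~\ref{lemma-Zhou_Yang}, by $\tfrac{\gamma\max\{1,q\}}{4}\int\Pi_1\|w\|^2_{H^s_0(\Omega)}$. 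The \emph{weighted} multiplier $\Pi_1w$ (not a constant $\eta w$) converts this into $\int\Pi_1\|w_t\|^2$ plus lower-order terms. The dissipative term $\tfrac\gamma2\int\Pi_1\|w_t\|^2$ absorbs it exactly when $\max\{1,q\}\le2$, and that is where $q\le2$ is used.

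Alternatively, your asymmetric splitting can be rescued by an observation absent from your sketch. Along trajectories from a bounded set, $\frac{d}{dt}\|U\|^2_{\mathcal H_s}=-2\gamma\|U\|^{2q}_{\mathcal H_s}\|u_t\|^2-2(f(u),u_t)+2(h,u_t)$ is uniformly bounded. Hence the scalar functions $t\mapsto\|S_tU_0\|^{2q}_{\mathcal H_s}$, $U_0\in B$, are equicontinuous on $[0,T]$. The averaged remainder is bounded by $C_BT\sup_{[0,T]}\bigl|\|U^1\|^{2q}_{\mathcal H_s}-\|U^2\|^{2q}_{\mathcal H_s}\bigr|$, which is contractive by Arzel\`a--Ascoli. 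This uses strong compactness of the coefficient, not weak-$*$ compactness of $u_t$, and it does not need $q\le2$. Without one of these two ingredients, asymptotic smoothness, and hence the theorem, is not established.
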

	\begin{proof}
		The proof of Theorem~\ref{theo-main} is based on the abstract criterion for the existence of global attractors given in Theorem~\ref{theo-chueshov}. By Theorem~\ref{theo-existence}, problem~\eqref{P} generates a continuous dynamical system $(\mathcal H_s,S_t)$. Proposition~\ref{prop-gradient-system} shows that the dynamical system is gradient by proving that the energy functional
		$
		\Phi:=E
		$
		is a strict Lyapunov function, while Proposition~\ref{prop-asymptotic} establishes its asymptotic smoothness. Furthermore, Proposition~\ref{Prop-stationary-set} proves that the Lyapunov function is bounded from above on bounded subsets of $\mathcal H_s$ and that all its sublevel sets are bounded. Finally, Proposition~\ref{Prop-stationary-set-2} shows that the set $\mathcal N$ of stationary points is bounded in $\mathcal H_s$. Therefore, all the assumptions of Theorem~\ref{theo-chueshov} are satisfied, and the existence of a compact global attractor
		$
		\mathfrak A=\mathrm{M}^u(\mathcal N)
		$
		follows immediately.
	\end{proof}
	In the following, we devote our attention to the proofs of Propositions~\ref{prop-gradient-system}, \ref{prop-asymptotic}, \ref{Prop-stationary-set}, and~\ref{Prop-stationary-set-2}.
	
	\begin{proposition}\label{prop-gradient-system}
		The dynamical system $(\mathcal H_s,S_t)$ generated by problem~\eqref{P} is gradient.
	\end{proposition}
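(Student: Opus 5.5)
We take the energy functional $\Phi:=E$ defined in \eqref{energy-formula} as the candidate strict Lyapunov function on the whole phase space $\mathcal H_s$.

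\textbf{Continuity of $\Phi$.} The quadratic part of $\Phi$ and the linear term $\int_\Omega hu\,dx$ are clearly continuous on $\mathcal H_s$. For the potential term, the mean value theorem and \eqref{assumption_f'} give
$|F(u)-F(w)|\le C(1+|u|^{p+1}+|w|^{p+1})|u-w|$.
Combining this with H\"older's inequality and the embedding $H^s_0(\Omega)\hookrightarrow L^{2(p+1)}(\Omega)$ shows that $u\mapsto\int_\Omega F(u)\,dx$ is locally Lipschitz on $H^s_0(\Omega)$.

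\textbf{Monotonicity along trajectories.} For regular solutions, i.e.\ $U_0\in D(A)$ as in Theorem~\ref{theo-existence}(II), the identity \eqref{global_0} gives
$$
\Phi(S_tU_0)+\gamma\int_0^t\|U(\tau)\|^{2q}_{\mathcal H_s}\|u_t(\tau)\|^2\,d\tau=\Phi(U_0).
$$
We then pass to mild solutions. Take $U_0^k\in D(A)$ with $U_0^k\to U_0$ in $\mathcal H_s$; this is possible since $D(A)$ is dense by Step~1 of the proof of Theorem~\ref{theo-existence}. By Theorem~\ref{theo-cont-dependence}, $S_tU_0^k\to S_tU_0$ uniformly on $[0,T]$. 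Continuity of $\Phi$ lets us pass to the limit in the first and last terms. The dissipation integral also converges, because its integrand converges uniformly on $[0,T]$. Hence the energy identity holds for every $U_0\in\mathcal H_s$, and $t\mapsto\Phi(S_tU_0)$ is non-increasing.

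\textbf{Strictness.} Suppose $\Phi(S_tU_0)=\Phi(U_0)$ for all $t>0$. The energy identity then forces
$$
\int_0^t\|U(\tau)\|^{2q}_{\mathcal H_s}\|u_t(\tau)\|^2\,d\tau=0 .
$$
Since $\|U\|^{2q}_{\mathcal H_s}\|u_t\|^2\ge\|u_t\|^{2(q+1)}$ (as already used in \eqref{global-in-time}), it follows that $u_t(\tau)=0$ in $L^2(\Omega)$ for a.e.\ $\tau$, and by continuity for all $\tau\ge0$. Writing $U=(u,v)$ in the mild formulation, we have $v=u_t$ in the distributional sense; this follows from the first component of \eqref{integral-equation}, or again by density. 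Therefore $u(t)=u_0$ for all $t$ and $S_tU_0=(u_0,0)=U_0$, so $U_0$ is a stationary point.

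The main technical point is this last step: rigorously identifying the second component of a mild solution with the time derivative of the first, so that $v\equiv0$ implies $u$ is constant in time. We handle it by approximating with regular solutions, for which $u_t=v$ holds classically. Since $U_k\to U$ in $C([0,T];\mathcal H_s)$, the relation
$$
u_k(t)-u_k(0)=\int_0^t v_k\,d\tau
$$
passes to the limit in $L^2(\Omega)$. This yields $u(t)=u_0+\int_0^t v\,d\tau=u_0$, which concludes that $\Phi$ is a strict Lyapunov function and that $(\mathcal H_s,S_t)$ is gradient.
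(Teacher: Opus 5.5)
Your proof is correct and follows the same route as the paper. You take $\Phi=E$, use the energy identity to get monotonicity, and deduce $u_t\equiv 0$, hence stationarity, from vanishing dissipation via $\|U\|_{\mathcal H_s}^{2q}\|u_t\|^2\ge\|u_t\|^{2(q+1)}$. Your additional steps fill in details the paper leaves implicit, and they are carried out correctly: continuity of $\Phi$ on $\mathcal H_s$, extension of the energy identity from regular to mild solutions by density and Theorem~\ref{theo-cont-dependence}, and the justification of $u(t)=u_0+\int_0^t v\,d\tau$ for mild solutions.
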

	
	\begin{proof}
		Let $\Phi:=E$ be the energy functional defined in \eqref{energy-formula}. Given
		$V=(u_0,u_1)\in\mathcal H_s$, let
		$U(t)=S_tV=(u(t),u_t(t))$ be the corresponding solution of
		problem~\eqref{P}.
		
		Multiplying \eqref{P} by $u_t$ and integrating over
		$\Omega\times(0,t)$, we obtain
		\begin{equation}\label{grad-1}
			\Phi(U(t))
			+\int_0^t
			\|U(s)\|_{\mathcal H_s}^{2q}
			\|u_t(s)\|^2\,ds
			=
			\Phi(V),
			\qquad t\ge0.
		\end{equation}
		Hence, $\Phi(S_tV)$ is non-increasing.
		
		Now suppose that
		$$
		\Phi(S_tV)=\Phi(V),
		\qquad
		\forall\,t\ge0.
		$$
		Then, by \eqref{grad-1},
		\begin{align*}
			\int_0^t
			\|U(s)\|_{\mathcal H_s}^{2q}
			\|u_t(s)\|^2\,ds
			=0,
			\qquad
			\forall\,t\ge0.
		\end{align*}
		Since
		$$
		\|U(s)\|_{\mathcal H_s}^{2q}
		\ge
		\|u_t(s)\|^{2q},
		$$
		we obtain
		$$
		\|U(s)\|_{\mathcal H_s}^{2q}
		\|u_t(s)\|^2
		\ge
		\|u_t(s)\|^{2(q+1)}.
		$$
		Therefore,
		$$
		u_t(t)=0
		\quad\text{in }L^2(\Omega)\text{ for a.e. }t\ge0.
		$$
		Since
		\[
		u_t\in C([0,\infty);L^2(\Omega)),
		\]
		it follows that
		\[
		u_t(t)\equiv0
		\quad\text{in }L^2(\Omega),\qquad \forall\,t\ge0.
		\]
		Hence, $u$ is time-independent, that is,
		\[
		S_tV=V,
		\qquad \forall\,t\ge0.
		\]
		Therefore, $\Phi$ is a strict Lyapunov function, and the dynamical system
		$(\mathcal H_s,S_t)$ is gradient.
	\end{proof}
	
	To prove Proposition~\ref{prop-asymptotic}, we first establish the following two auxiliary results, which will be used in the proof of Lemma~\ref{lemma-stability}. The first provides an inequality that will be used to control the nonlocal terms according to the size of the exponent $q$, while the second gives a convenient criterion for verifying the asymptotic smoothness of the dynamical system. The stability estimate obtained in Lemma~\ref{lemma-stability} will then be used to prove Proposition~\ref{prop-asymptotic}.
	\begin{lemma}\cite[Lemma 4.5]{Zhou-Yang-2026}\label{lemma-Zhou_Yang}
		For any $q>0$.
		\begin{align*}
			0<\frac{a^q-b^q}{a-b}\le \max\{1,q\}\left(\frac{a^q+b^q}{a+b}\right),
		\end{align*}
		for all $a, b\geq 0$ with $a+b>0$ and $a\neq b$.
	\end{lemma}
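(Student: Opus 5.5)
We prove the inequality by elementary calculus after normalizing. Both sides are symmetric under swapping $a$ and $b$, so we may assume $a>b\ge 0$; in particular $a>0$. Positivity of the difference quotient follows from the strict monotonicity of $r\mapsto r^q$ on $[0,\infty)$ for $q>0$.

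\textbf{Reduction.} Set $t:=b/a\in[0,1)$ and $m:=\max\{1,q\}$. Dividing numerators and denominators by the appropriate powers of $a$, the claim becomes
$$
\frac{1-t^q}{1-t}\le m\,\frac{1+t^q}{1+t}.
$$
Since $1-t>0$ and $1+t>0$, this is equivalent to
$$
(1-t^q)(1+t)\le m\,(1-t)(1+t^q),\qquad t\in[0,1).
$$
We now split according to whether $q\le1$ or $q>1$.

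\textbf{Case $0<q\le 1$.} Here $m=1$, and expanding gives
$$
(1-t^q)(1+t)-(1-t)(1+t^q)=2t-2t^q .
$$
This is $\le 0$ because $t^q\ge t$ for $t\in[0,1]$ when $q\le1$. So the case is immediate.

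\textbf{Case $q>1$.} This is the only step requiring a real argument. Here $m=q$, and we set
$$
g(t):=q(1-t)(1+t^q)-(1-t^q)(1+t)=(q-1)-(q+1)t+(q+1)t^q-(q-1)t^{q+1}.
$$
We need $g\ge0$ on $[0,1)$, which we get from a convexity argument.
\begin{itemize}
\item Endpoint values: $g(1)=0$, and $g(0)=q-1>0$.
\item First derivative: $g'(t)=-(q+1)+q(q+1)t^{q-1}-(q^2-1)t^q$, so $g'(1)=-(q+1)+q(q+1)-(q^2-1)=0$.
\item Second derivative: for $t\in(0,1)$,
$$
g''(t)=q(q^2-1)t^{q-2}-q(q^2-1)t^{q-1}=q(q^2-1)\,t^{q-2}(1-t)\ge0 .
$$
\end{itemize}
Hence $g'$ is nondecreasing on $(0,1]$, and since $g'$ is continuous on $[0,1]$ (because $q>1$), $g'$ is nondecreasing on $[0,1]$. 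Combined with $g'(1)=0$, this gives $g'\le0$ on $[0,1]$. So $g$ is nonincreasing, and therefore $g(t)\ge g(1)=0$ for all $t\in[0,1]$.

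This proves the inequality in both cases, which establishes the lemma.
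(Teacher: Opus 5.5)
Your proof is correct and complete. There is no in-paper proof to compare it with: the paper does not prove this inequality but quotes it from Zhou--Yang \cite[Lemma 4.5]{Zhou-Yang-2026}. It uses it only to bound $\Theta(s)$ in the proof of Lemma~\ref{lemma-stability}, with $a=\|U^1(s)\|_{\mathcal H_s}^2$ and $b=\|U^2(s)\|_{\mathcal H_s}^2$.

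What your argument adds is self-containedness. The steps all check out:
\begin{itemize}
\item the normalization $t=b/a\in[0,1)$;
\item for $q\le 1$, the identity $(1-t^q)(1+t)-(1-t)(1+t^q)=2t-2t^q$ together with $t^q\ge t$;
\item for $q>1$, the convexity argument, using $g(1)=g'(1)=0$ and $g''(t)=q(q^2-1)t^{q-2}(1-t)\ge 0$ on $(0,1)$.
\end{itemize}
You also handle the edge cases properly: the endpoint $t=0$ (that is, $b=0$), and the continuity of $g'$ at $0$ for $q>1$, which is what lets you extend monotonicity of $g'$ from $(0,1]$ to $[0,1]$.

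Your computation also shows that the constant $\max\{1,q\}$ cannot be improved, which a bare citation does not reveal. For $q\le 1$, equality holds at $t=0$, where both sides equal $1$. For $q>1$, as $t\to 1^-$ the left side tends to $q$ and the right side tends to $m$, so no constant smaller than $q$ works. This is exactly the double zero of $g$ at $t=1$.
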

	\begin{theorem} [Theorem 7.1.11, \cite{chueshov-yellow}] \label{theo-asymp-smootness} Let $(X,S_t)$ be a dynamical system on a complete metric space $X$ endowed with a metric $d$. Assume that for any bounded positively invariant set $B$ in $X$ and for any $\epsilon> 0$ there exists $T=T_{\epsilon,B}$ such that
		\begin{align*}
			d(S_Ty_1,S_Ty_2)
			\le
			\epsilon+\Psi_{\epsilon,B,T}(y_1,y_2),
			\qquad y_i\in B.
		\end{align*}
		where $\Psi_{\epsilon,B,T}(y_1,y_2)$ is a functional defined on $B\times B$ such that
		\begin{align*}\liminf_{m\rightarrow\infty}\liminf_{n\rightarrow \infty}\Psi_{\epsilon,B,T}(y_n,y_m)=0\end{align*}
		for every sequence ${y_n}$ from $B.$ Then $(X,S_t)$ is an asymptotically smooth dynamical system.
	\end{theorem}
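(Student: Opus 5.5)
The plan is to show that the hypothesis forces the Kuratowski measure of noncompactness $\alpha(S_tB)$ to tend to zero, and then to invoke the classical fact that an $\alpha$-contracting bounded positively invariant set has a compact attracting $\omega$-limit set. Throughout, fix a bounded positively invariant set $B\subset X$ and $\epsilon>0$, and let $T=T_{\epsilon,B}$ be as in the statement.

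\textbf{Step 1: $S_TB$ has a finite $2\epsilon$-net.} Suppose not. Then a greedy construction produces $y_1,y_2,\dots\in B$ with
\begin{align*}
d(S_Ty_n,S_Ty_m)>2\epsilon\qquad\text{for all } n\neq m .
\end{align*}
Indeed, at each stage the finitely many balls of radius $2\epsilon$ centred at $S_Ty_1,\dots,S_Ty_k$ do not cover $S_TB$, so we may pick $y_{k+1}$ outside them. Feeding this into the assumed inequality gives $\Psi_{\epsilon,B,T}(y_n,y_m)>\epsilon$ for all $n\neq m$. Hence
\begin{align*}
\liminf_{m\to\infty}\liminf_{n\to\infty}\Psi_{\epsilon,B,T}(y_n,y_m)\ge\epsilon>0,
\end{align*}
which contradicts the hypothesis. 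Therefore $S_TB$ is covered by finitely many balls of radius $2\epsilon$, and so $\alpha(S_TB)\le4\epsilon$.

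\textbf{Step 2: $\alpha(S_tB)\to0$.} Positive invariance gives $S_{t-T}B\subset B$, and hence
\begin{align*}
S_tB=S_T\bigl(S_{t-T}B\bigr)\subset S_TB\qquad\text{for } t\ge T .
\end{align*}
By monotonicity of $\alpha$, this yields $\alpha(S_tB)\le4\epsilon$ for all $t\ge T_{\epsilon,B}$. Since $\epsilon>0$ is arbitrary, $\alpha(S_tB)\to0$ as $t\to\infty$.

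\textbf{Step 3: conclusion.} I would apply the standard result for dynamical systems (Hale; see also \cite{chueshov-yellow}): if $B$ is bounded and $\alpha(S_tB)\to0$, then the $\omega$-limit set
\begin{align*}
K:=\omega(B)=\bigcap_{\tau\ge0}\overline{\bigcup_{t\ge\tau}S_tB}
\end{align*}
is nonempty and compact, and satisfies $\mathrm{dist}(S_tB,K)\to0$. If a self-contained argument is wanted, it goes as follows. Take any $y_n\in B$ and $t_n\to\infty$. The sets $\{S_{t_n}y_n:n\ge N\}$ lie in $\bigcup_{t\ge t_N}S_tB$, whose $\alpha$-measure tends to $0$, so the sequence $\{S_{t_n}y_n\}$ is relatively compact. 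A diagonal argument on the nested closed sets $\overline{\bigcup_{t\ge\tau}S_tB}$, combined with completeness of $X$ and Cantor's intersection theorem for $\alpha$, then gives compactness of $K$ and the attraction property. Finally, positive invariance of $B$ gives $K\subset\overline{B}$, which is exactly the required asymptotic smoothness.

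The only genuinely nontrivial point is Step 1, namely converting the double-liminf condition on $\Psi$ into total boundedness up to $2\epsilon$. The separated-sequence construction above handles it directly. Step 3 is classical, and I would cite it rather than reprove it in detail.
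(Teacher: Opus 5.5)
Your proof is correct. The greedy $2\epsilon$-separated sequence contradicts the double-$\liminf$ condition and gives $\alpha(S_TB)\le 4\epsilon$. Positive invariance then gives $\bigcup_{t\ge T}S_tB\subset S_TB$, which is exactly what your Step 3 needs, and the Kuratowski intersection argument produces the compact attracting set $\omega(B)\subset\overline{B}$. The paper gives no proof of this statement; it only quotes Theorem 7.1.11 of Chueshov--Lasiecka. Your argument is the standard one behind that result, though I can't confirm from the paper that the cited source proceeds exactly this way.
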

	
	\begin{lemma}\label{lemma-stability}
		Assume that the hypotheses of Theorem~2.1 are satisfied with
		$q\in[1/2,2]$. Let
		$
		U^i(t)=S(t)U_0^i=(u^i(t),u_t^i(t)),$ $i=1,2,$
		be two mild solutions of problem~\eqref{P} corresponding to the initial
		data $U_0^1,U_0^2\in B$, where $B$ is a bounded subset of
		$\mathcal{H}_s$. Define
		\begin{align*}
			w(t)=u^1(t)-u^2(t).
		\end{align*}
		Under the assumptions of Theorem~\ref{theo-main}, there exist positive
		constants $C_B$ and $C_{B,T}$ such that
		\begin{eqnarray}\label{inequality-main}
			\|S_TU_0^1-S_TU_0^2\|_{\mathcal H_s}^2
			&\le&
			\frac{C_B}{T^{1/q+1}}
			+
			C_{B,T}
			\sup_{s\in[0,T]}
			\|w(s)\|_{p+2}
			\nonumber\\
			&&+
			\frac{2}{T}
			\left|
			\int_{0}^{T}
			\int_{\tau}^{T}
			\int_{\Omega}
			\bigl(f(u^{1})-f(u^{2})\bigr)
			w_t\,dx\,ds\,d\tau
			\right|.
		\end{eqnarray}
	\end{lemma}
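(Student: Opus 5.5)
We work with the difference $w=u^1-u^2$. It satisfies
\begin{equation*}
w_{tt}+(-\Delta)^s w+\gamma\bigl(\|U^1\|_{\mathcal H_s}^{2q}u^1_t-\|U^2\|_{\mathcal H_s}^{2q}u^2_t\bigr)+f(u^1)-f(u^2)=0 .
\end{equation*}
Set $E_w(t)=\frac12\|w\|_{H^s_0}^2+\frac12\|w_t\|^2$, so that $\|S_tU^1_0-S_tU^2_0\|^2_{\mathcal H_s}=2E_w(t)$. We split the damping term symmetrically as
\begin{equation*}
\tfrac12\bigl(\|U^1\|^{2q}+\|U^2\|^{2q}\bigr)w_t+\tfrac12\bigl(\|U^1\|^{2q}-\|U^2\|^{2q}\bigr)(u^1_t+u^2_t)=:G_1+G_2 .
\end{equation*}
By \eqref{global-in-time} all norms are bounded by a constant $R_B$ on $B$ for all times. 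Moreover, integrating \eqref{grad-1} gives the key dissipation bound
\begin{equation*}
\int_0^\infty\|U^i\|^{2q}\|u^i_t\|^2\,dt\le C_B .
\end{equation*}

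The plan uses the standard three multiplier identities. \emph{First}, multiplying by $w_t$ and integrating over $(\tau,T)$ gives
\begin{equation*}
E_w(T)+\int_\tau^T(G_1+G_2,w_t)\,ds=E_w(\tau)-\int_\tau^T\!\!\int_\Omega(f(u^1)-f(u^2))w_t .
\end{equation*}
\emph{Second}, multiplying by $w$ on $(0,T)$ controls $\int_0^T\|w\|^2_{H^s_0}$ by $\int_0^T\|w_t\|^2$. The remaining terms are a boundary term $(w_t,w)|_0^T\le C_B$, the damping term $(G_1+G_2,w)$, and $\int(f(u^1)-f(u^2))w$. This last term is bounded by $C_B T\sup\|w\|_{p+2}$ using \eqref{assumption_f'} and Hölder with the embedding $H^s_0\hookrightarrow L^{2(p+1)}$; this produces the lower-order term $C_{B,T}\sup\|w\|_{p+2}$.

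\emph{Third}, integrating the first identity in $\tau$ over $(0,T)$ yields
\begin{equation*}
T E_w(T)\le \int_0^T E_w+\int_0^T\!\!\int_\tau^T|(G_2,w_t)|+\Bigl|\int_0^T\!\!\int_\tau^T\!\!\int_\Omega(f(u^1)-f(u^2))w_t\Bigr| ,
\end{equation*}
where we use $(G_1,w_t)\ge0$. Combining this with the second step, dividing by $T$, and multiplying by $2$ gives the shape of \eqref{inequality-main}. It remains to show that the damping contributions are bounded by $C_B$ times a quantity that yields $T^{-1/q-1}$ after division.

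The main obstacle is controlling $\int_0^T\|w_t\|^2$ and the $G_2$ terms using only the degenerate dissipation $\int\|U\|^{2q}\|u_t\|^2$. For $G_2$, Lemma~\ref{lemma-Zhou_Yang} applied with $a=\|U^1\|^2$, $b=\|U^2\|^2$ and exponent $q$ gives
\begin{equation*}
\bigl|\|U^1\|^{2q}-\|U^2\|^{2q}\bigr|\le C\,(\|U^1\|^{2q-1}+\|U^2\|^{2q-1})\,\|W\|_{\mathcal H_s} .
\end{equation*}
This is where $q\ge1/2$ is needed. Hence $|(G_2,w_t)|\le C_B\sum_i\|U^i\|^{2q-1}\|u^i_t\|\,E_w$. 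We then bound $\|U^i\|^{2q-1}\|u^i_t\|\le \|U^i\|^{q}\|u^i_t\|\cdot\|U^i\|^{q-1}$ and use Hölder in time against $\int\|U^i\|^{2q}\|u^i_t\|^2\le C_B$.

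For $\int_0^T\|w_t\|^2\le 2\sum_i\int_0^T\|u^i_t\|^2$, we use Hölder in time with exponents $q+1$ and $(q+1)/q$ together with $\int_0^\infty\|u^i_t\|^{2(q+1)}\le C_B$ from \eqref{global-in-time}. This gives $\int_0^T\|u_t^i\|^2\le C_B T^{q/(q+1)}$, which after division by $T$ decays like $T^{-1/(q+1)}$.

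To reach the stated rate $T^{-1/q-1}$, I would instead first absorb using $\|U\|^{2q}\ge\|u_t\|^{2q}$ and carefully weight the multipliers. This exponent bookkeeping, together with the restriction $q\le2$ needed to absorb the $\|U\|^{2q}$ factor in $(G_1,w)$ via Young's inequality with the bounded norms, is the step I expect to be delicate. Whatever power of $T$ emerges, the structural claim that the right-hand side equals a decaying $C_B/T^\alpha$, plus the lower-order term, plus the $f$ triple integral, follows from these three steps.
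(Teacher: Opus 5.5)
\textbf{There is a genuine gap in your third step: the $G_2$ contribution cannot be treated as a perturbation.}

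By Fubini,
$\frac1T\int_0^T\int_\tau^T|(G_2,w_t)|\,ds\,d\tau=\frac1T\int_0^T s\,|(G_2,w_t)(s)|\,ds\ge\frac12\int_{T/2}^T|(G_2,w_t)|\,ds$.
Your bound $|(G_2,w_t)|\le C_B\|W\|(\|u^1_t\|+\|u^2_t\|)^2$ then only gives $C_B\int_0^T(\|u^1_t\|^2+\|u^2_t\|^2)\,ds\le C_BT^{q/(q+1)}$. This grows in $T$ rather than decaying.

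The alternatives you suggest do not close the gap either:
\begin{itemize}
\item Keeping the factor $E_w$ and applying Gronwall produces $\exp\bigl(C_B\int_0^T(\|u^1_t\|+\|u^2_t\|)\,ds\bigr)$. This is not bounded uniformly in $T$, because only $\int_0^\infty\|u^i_t\|^{2(q+1)}\,dt$ is controlled.
\item Your H\"older split against $\int\|U^i\|^{2q}\|u^i_t\|^2$ leaves $\bigl(\int_0^T\|U^i\|^{2q-2}\,ds\bigr)^{1/2}$. This is of order $T^{1/2}$ for $q\ge1$, and for $q<1$ it involves a negative power of $\|U^i\|$.
\end{itemize}
So nothing in your scheme yields $\|W(T)\|^2\le\|W(\tau)\|^2+(\text{small or compact})$, which is exactly what the averaging in $\tau$ requires. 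Moreover, by discarding $(G_1,w_t)\ge0$ you throw away the only dissipation available to absorb anything.

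\textbf{The missing idea is to use the structure of $G_2$, not its size.} Write $\Pi_{1,2}=\|U^1\|^{2q}\pm\|U^2\|^{2q}$. Then $(u^1_t+u^2_t,w_t)=\|u^1_t\|^2-\|u^2_t\|^2$, and $\Pi_2=\Theta\,(\|U^1\|^2-\|U^2\|^2)$ with $\Theta>0$.
\begin{itemize}
\item Splitting $\|U^i\|^2$ into its $H^s_0$ and $L^2$ parts produces a nonnegative term $\frac{\gamma}{2}\int\Theta\,(\|u^1_t\|^2-\|u^2_t\|^2)^2$ plus a cross term.
\item Young's inequality and Lemma~\ref{lemma-Zhou_Yang} bound the cross term by that same nonnegative term plus $\frac{\gamma\max\{1,q\}}{4}\int_\tau^T\Pi_1\|w\|^2_{H^s_0}\,ds$.
\item Testing the difference equation with the weighted multiplier $\Pi_1(t)w$ trades $\int\Pi_1\|w\|^2_{H^s_0}$ for $\int\Pi_1\|w_t\|^2$ plus compact terms. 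Here $q\ge1/2$ is what gives $|\Pi_1'|\le C_B$.
\item This is absorbed by the $G_1$-dissipation $\frac{\gamma}{2}\int\Pi_1\|w_t\|^2$. The absorption condition $\max\{1,q\}/2\le1$ is where $q\le2$ is actually used.
\end{itemize}
The restriction $q\le2$ is not needed for $(G_1,w)$ as you suggest; that term is harmless for every $q$ because all norms are bounded. Only after this absorption do your Steps 2 and 3 go through.

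Finally, the rate your H\"older argument gives, $T^{-1/(q+1)}$, is exactly what the paper's proof produces. Its $T^{1/q+1}$ means $T^{1/(q+1)}$, as the H\"older step shows, so no further weighting of the multipliers is needed for the exponent.
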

	\begin{proof}
		Let $W=U^1-U^2=(w,w_t).
		$
		For the moment, assume that
		$
		U^1(t)\neq U^2(t),$ $t\ge0.
		$
		Then \(W\) satisfies
		\begin{equation}\label{difference-equation}
			\left\{
			\begin{aligned}
				&w_{tt}+(-\Delta)^sw
				+\gamma\left(
				\|U^1\|_{\mathcal H_s}^{2q}u_t^1
				-
				\|U^2\|_{\mathcal H_s}^{2q}u_t^2
				\right)
				+\bigl(f(u^1)-f(u^2)\bigr)=0,
				\\
				&w(0)=u_0^1-u_0^2,
				\qquad
				w_t(0)=u_1^1-u_1^2.
			\end{aligned}
			\right.
		\end{equation}
		The nonlinear damping term admits the decomposition
		\begin{align*}
			\|U^1\|_{\mathcal H_s}^{2q}u_t^1
			-
			\|U^2\|_{\mathcal H_s}^{2q}u_t^2
			&=
			\frac12
			\left(
			\|U^1\|_{\mathcal H_s}^{2q}
			+
			\|U^2\|_{\mathcal H_s}^{2q}
			\right)w_t+
			\frac12
			\left(
			\|U^1\|_{\mathcal H_s}^{2q}
			-
			\|U^2\|_{\mathcal H_s}^{2q}
			\right)
			(u_t^1+u_t^2).
		\end{align*}
		For simplicity of notation, we introduce
		\[
		\Pi_j(t):=
		\|U^1(t)\|_{\mathcal H_s}^{2q}
		+(-1)^{j-1}
		\|U^2(t)\|_{\mathcal H_s}^{2q},
		\qquad j=1,2.
		\]
		Hence, multiplying \eqref{difference-equation} by $w_t$, integrating over
		$\Omega\times(\tau,T)$, and using the above notation, we obtain
		\begin{eqnarray}\label{asyp-1}
			\begin{aligned}
				&\frac{1}{2}\|W(T)\|_{\mathcal H_s}^{2}
				-\frac{1}{2}\|W(\tau)\|_{\mathcal H_s}^{2}
				+\frac{\gamma}{2}
				\int_{\tau}^{T}
				\Pi_1(s)
				\|w_t(s)\|^2\,ds
				\\
				&
				+\underbrace{
					\frac{\gamma}{2}
					\int_{\tau}^{T}
					\Pi_2(s)
					\int_{\Omega}
					(u_t^1+u_t^2)w_t\,dx\,ds
				}_{I}=
				-\int_{\tau}^{T}
				\int_{\Omega}
				(f(u^1)-f(u^2))w_t\,dx\,ds .
			\end{aligned}
		\end{eqnarray}
		Observe that whenever
		$
		\|U^1(t)\|_{\mathcal H_s}
		=
		\|U^2(t)\|_{\mathcal H_s},
		$
		the quantity $I$ vanishes identically. Hence, it suffices to consider the case
		$
		\|U^1(t)\|_{\mathcal H_s}
		\neq
		\|U^2(t)\|_{\mathcal H_s},
		$
		throughout the remainder of the proof. Accordingly, we define
		$$
		\Theta(t):=
		\frac{
			\|U^1(t)\|_{\mathcal H_s}^{2q}
			-
			\|U^2(t)\|_{\mathcal H_s}^{2q}}
		{
			\|U^1(t)\|_{\mathcal H_s}^{2}
			-
			\|U^2(t)\|_{\mathcal H_s}^{2}}.
		$$
		Consequently, the term $I$ can be expressed as
		\begin{align*}
			I
			&=
			\frac{\gamma}{2}
			\int_{\tau}^{T}
			\Theta(s)
			\left(
			\|U^{1}(s)\|_{\mathcal H_s}^{2}
			-
			\|U^{2}(s)\|_{\mathcal H_s}^{2}
			\right)
			\left(
			\|u_t^{1}(s)\|^{2}
			-
			\|u_t^{2}(s)\|^{2}
			\right)\,ds
			\\
			&=
			\frac{\gamma}{2}
			\int_{\tau}^{T}
			\Theta(s)
			\left(
			\|u_t^{1}(s)\|^{2}
			-
			\|u_t^{2}(s)\|^{2}
			\right)^2ds
			\\
			&\quad
			+
			\underbrace{\frac{\gamma}{2}
				\int_{\tau}^{T}
				\Theta(s)
				\left(
				\|u^{1}(s)\|_{H_0^s(\Omega)}^{2}
				-
				\|u^{2}(s)\|_{H_0^s(\Omega)}^{2}
				\right)
				\left(
				\|u_t^{1}(s)\|^{2}
				-
				\|u_t^{2}(s)\|^{2}
				\right)\,ds}_{I_1}.
		\end{align*}
		Substituting this identity into \eqref{asyp-1}, we obtain
		\begin{align}\label{asymp-2}
			&\frac12\|W(T)\|_{\mathcal H_s}^{2}
			-\frac12\|W(\tau)\|_{\mathcal H_s}^{2}
			+\frac{\gamma}{2}
			\int_{\tau}^{T}
			\Pi_1(s)
			\|w_t(s)\|^{2}\,ds
			\nonumber\\
			&\quad
			+\frac{\gamma}{2}
			\int_{\tau}^{T}
			\Theta(s)
			\left(
			\|u_t^{1}(s)\|^{2}
			-
			\|u_t^{2}(s)\|^{2}
			\right)^2\,ds=
			-I_1
			-
			\int_{\tau}^{T}
			\int_{\Omega}
			\bigl(f(u^{1})-f(u^{2})\bigr)
			w_t\,dx\,ds.
		\end{align}
		By Lemma~\ref{lemma-Zhou_Yang}, we have
		$$
		0<\Theta(s)
		\le
		\max\{1,q\}
		\frac{
			\|U^{1}(s)\|_{\mathcal H_s}^{2q}
			+
			\|U^{2}(s)\|_{\mathcal H_s}^{2q}
		}{
			\|U^{1}(s)\|_{\mathcal H_s}^{2}
			+
			\|U^{2}(s)\|_{\mathcal H_s}^{2}
		}.
		$$
		Moreover, applying the Cauchy--Schwarz and Young inequalities, we obtain
		\begin{align*}
			-I_1
			&\leq
			\frac{\gamma}{2}
			\int_{\tau}^{T}
			\Theta(s)
			\left(
			\|u_t^{1}(s)\|^{2}
			-
			\|u_t^{2}(s)\|^{2}
			\right)^2\,ds
			\\
			&\quad
			+
			\frac{\gamma}{8}
			\int_{\tau}^{T}
			\Theta(s)
			\left(
			\|u^{1}(s)\|_{H_0^s(\Omega)}^{2}
			-
			\|u^{2}(s)\|_{H_0^s(\Omega)}^{2}
			\right)^2\,ds .
		\end{align*}
		To estimate the second term on the right-hand side, we first observe that
		\begin{align*}
			&\|u^{1}(s)\|_{H_0^s(\Omega)}^{2}
			-
			\|u^{2}(s)\|_{H_0^s(\Omega)}^{2}
			\\
			&=
			\left(
			\|u^{1}(s)\|_{H_0^s(\Omega)}
			+
			\|u^{2}(s)\|_{H_0^s(\Omega)}
			\right)
			\left(
			\|u^{1}(s)\|_{H_0^s(\Omega)}
			-
			\|u^{2}(s)\|_{H_0^s(\Omega)}
			\right).
		\end{align*}
		Consequently
		\begin{align*}
			\left|
			\|u^{1}(s)\|_{H_0^s(\Omega)}^{2}
			-
			\|u^{2}(s)\|_{H_0^s(\Omega)}^{2}
			\right|^2
			\leq
			\left(
			\|U^{1}(s)\|_{\mathcal H_s}^{2}
			+
			\|U^{2}(s)\|_{\mathcal H_s}^{2}
			\right)
			\|w(s)\|_{H_0^s(\Omega)}^{2}.
		\end{align*}
		Substituting the above estimate into the previous inequality yields
		\begin{align*}
			-I_1
			&\leq
			\frac{\gamma}{2}
			\int_{\tau}^{T}
			\Theta(s)
			\left(
			\|u_t^{1}(s)\|^{2}
			-
			\|u_t^{2}(s)\|^{2}
			\right)^2\,ds
			\\
			&\quad
			+
			\frac{\gamma}{8}
			\int_{\tau}^{T}
			\Theta(s)
			\left(
			\|U^{1}(s)\|_{\mathcal H_s}^{2}
			+
			\|U^{2}(s)\|_{\mathcal H_s}^{2}
			\right)
			\|w(s)\|_{H_0^s(\Omega)}^{2}\,ds .
		\end{align*}
		Thus, using the estimate for $\Theta$ established above, we conclude that
		\begin{eqnarray*}
			-I_1
			\leq
			\frac{\gamma}{2}
			\int_{\tau}^{T}
			\Theta(s)
			\left(
			\|u_t^{1}(s)\|^{2}
			-
			\|u_t^{2}(s)\|^{2}
			\right)^2\,ds
			+
			\frac{\gamma\max\{1,q\}}{8}
			\int_{\tau}^{T}
			\Pi_1(s)
			\|w(s)\|_{H_0^s(\Omega)}^{2}\,ds .
		\end{eqnarray*}
		Returning to \eqref{asymp-2}, we get
		\begin{align}\label{difference-energy-2}
			&\frac12\|W(T)\|_{\mathcal H_s}^{2}
			-\frac12\|W(\tau)\|_{\mathcal H_s}^{2}
			+\frac{\gamma}{2}
			\int_{\tau}^{T}
			\Pi_1(s)
			\|w_t(s)\|^{2}\,ds
			\nonumber\\
			&\le
			\frac{\gamma\max\{1,q\}}{4}
			\int_{\tau}^{T}
			\Pi_1(s)
			\|w(s)\|_{H_0^s(\Omega)}^{2}\,ds-
			\int_{\tau}^{T}
			\int_{\Omega}
			\bigl(f(u^{1})-f(u^{2})\bigr)
			w_t\,dx\,ds.
		\end{align}
		Next, multiplying \eqref{difference-equation} by $\Pi_1(t)w$,
		integrating over $\Omega\times(\tau,T)$, and integrating by parts in
		time, we obtain
		\begin{equation}\label{asymp-3}
			\int_\tau^T
			\Pi_1(s)\|w(s)\|_{H_0^s(\Omega)}^{2}\,ds
			=
			\int_\tau^T
			\Pi_1(s)\|w_t(s)\|^{2}\,ds
			+\sum_{j=1}^{5}J_j,
		\end{equation}
		where
		\begin{eqnarray*}
			J_1
			&=&
			-\frac{\gamma}{2}
			\int_\tau^T
			\Pi_1(s)^2
			\int_\Omega
			w_tw\,dx\,ds,\\
			J_2
			&=&
			-\frac{\gamma}{2}
			\int_\tau^T
			\Pi_1(s)\Pi_2(s)
			\int_\Omega
			\bigl(u_t^1+u_t^2\bigr)w\,dx\,ds,\\
			J_3
			&=&
			-\int_\tau^T
			\Pi_1(s)
			\int_\Omega
			\bigl(f(u^1)-f(u^2)\bigr)w\,dx\,ds,\\
			J_4
			&=&
			-\Biggl[
			\Pi_1(s)
			\int_\Omega
			w_tw\,dx
			\Biggr]_\tau^T,\\
			J_5
			&=&
			\int_\tau^T
			\Pi_1'(s)
			\int_\Omega
			w_tw\,dx\,ds.
		\end{eqnarray*}
		Substituting \eqref{asymp-3} into \eqref{difference-energy-2}, we obtain
		\begin{align}\label{difference-energy-3}
			&\frac12\|W(T)\|_{\mathcal H_s}^{2}
			-\frac12\|W(\tau)\|_{\mathcal H_s}^{2}
			+\frac{\gamma}{2}
			\left(
			1-\frac{\max\{1,q\}}{2}
			\right)
			\int_{\tau}^{T}
			\Pi_1(s)\|w_t(s)\|^{2}\,ds
			\nonumber\\
			&\le
			-\int_{\tau}^{T}
			\int_{\Omega}
			\bigl(f(u^{1})-f(u^{2})\bigr)
			w_t\,dx\,ds
			+\frac{\gamma\max\{1,q\}}{4}
			\sum_{j=1}^{5}J_j .
		\end{align}
		Since $\max\{1,q\}\leq 2$ for $q\in[1/2,2]$, it follows that
		$1-\frac{\max\{1,q\}}{2}\geq 0$
		and
		$\frac{\gamma\max\{1,q\}}{4}\leq \frac{\gamma}{2}$.
		Hence, inequality \eqref{difference-energy-3} gives
		\begin{align}\label{difference-energy-4}
			\|W(T)\|_{\mathcal H_s}^{2}
			\le
			\|W(\tau)\|_{\mathcal H_s}^{2}
			-
			2\int_{\tau}^{T}
			\int_{\Omega}
			\bigl(f(u^{1})-f(u^{2})\bigr)
			w_t\,dx\,ds+\gamma\sum_{j=1}^{5}J_j.
		\end{align}
		Integrating \eqref{difference-energy-4} with respect to $\tau$ over $(0,T)$, we obtain
		\begin{align}\label{difference-energy-5}
			T\|W(T)\|_{\mathcal H_s}^{2}
			\le&\;
			\int_{0}^{T}
			\|W(\tau)\|_{\mathcal H_s}^{2}\,d\tau
			-2
			\int_{0}^{T}
			\int_{\tau}^{T}
			\int_{\Omega}
			\bigl(f(u^{1})-f(u^{2})\bigr)
			w_t\,dx\,ds\,d\tau
			\nonumber\\
			&\qquad
			+\gamma
			\int_{0}^{T}
			\sum_{j=1}^{5}J_j,d\tau.
		\end{align}
		We next estimate the terms $J_1,\ldots,J_5$. Since
		$
		|\Pi_j(t)|\le C_B,
		$
		for $j=1,2$, it follows immediately that
		$$
		J_1,\;J_2\le C_B\int_{\tau}^T\|w(s)\|ds.
		$$
		Moreover, Assumption~\eqref{assumption_f'}, together with H\"older's inequality, implies
		$$
		J_3
		\le
		C_B\int_{\tau}^T\|w(s)\|_{p+2}^{2}ds,
		$$
		whereas
		$$
		J_4
		\le
		C_B
		\sup_{t\in[0,T]}
		\|w(t)\|.
		$$
		Finally, differentiating $\Pi_1(t)$ and using Assumption \eqref{assumption_f'} and applying H\"older's inequality with
		$
		\frac{p}{2(p+1)}+\frac{1}{2(p+1)}+\frac{1}{2}=1,
		$
		together with the continuous embedding
		$H_0^s(\Omega)\hookrightarrow L^{2(p+1)}(\Omega)$, we obtain
		\begin{align*}
			|\Pi_1'(t)|
			&=
			\left|
			q\sum_{i=1}^{2}
			\|U^i(t)\|_{\mathcal H_s}^{2(q-1)}
			\frac{d}{dt}
			\|U^i(t)\|_{\mathcal H_s}^{2}
			\right|
			\\
			&\le
			q\sum_{i=1}^{2}
			\|U^i(t)\|_{\mathcal H_s}^{2(q-1)}
			\left(
			\gamma
			\|U^i(t)\|_{\mathcal H_s}^{2q}
			\|u_t^i(t)\|^{2}
			+
			\left|
			\int_{\Omega}f(u^i)u_t^i\,dx
			\right|
			+
			\|h\|\,
			\|u_t^i(t)\|
			\right)
			\\
			&\le
			C_B
			q
			\sum_{i=1}^{2}
			\|U^i(t)\|_{\mathcal H_s}^{2q-1}.
		\end{align*}
		Since $q\ge\frac12$, we have $2q-1\ge0$. Therefore,
		$\|U^i(t)\|_{\mathcal H_s}^{2q-1}\le C_B$
		along bounded trajectories, which yields
		$$
		J_5\le C_B\int_{\tau}^T\|w(s)\|ds.
		$$
		Substituting the above estimates into \eqref{difference-energy-5} and using that $L^{p+2}(\Omega)\hookrightarrow L^2(\Omega)$, we conclude that
		\begin{align}\label{difference-energy-6}
			T\|W(T)\|_{\mathcal H_s}^{2}
			\le&\;
			\int_{0}^{T}
			\|W(\tau)\|_{\mathcal H_s}^{2}\,d\tau
			+C_{B,T}
			\sup_{t\in[0,T]}
			\|w(t)\|_{p+2}\nonumber\\
			&
			+2\left|
			\int_{0}^{T}
			\int_{\tau}^{T}
			\int_{\Omega}
			\bigl(f(u^{1})-f(u^{2})\bigr)
			w_t\,dx\,ds\,d\tau
			\right|.
		\end{align}
		Now, multiplying equation \eqref{P} by $w$ and integrating over $\Omega\times(\tau,T)$, we obtain
		\begin{align*}
			\int_0^T\|W(\tau)\|^2_{\mathcal{H}_s}d\tau=2\int_0^T\|w_t(\tau)\|^2d\tau+\sum_{j=1}^{4}L_j,
		\end{align*}
		where
		\begin{align*}
			L_1
			&=
			-\frac{\gamma}{2}
			\int_\tau^T
			\Pi_1(s)
			\int_\Omega
			w_tw\,dx\,ds,\\
			L_2
			&=
			-\frac{\gamma}{2}
			\int_\tau^T
			\Pi_2(s)
			\int_\Omega
			\bigl(u_t^1+u_t^2\bigr)w\,dx\,ds,\\
			L_3
			&=
			-\int_\tau^T\int_\Omega
			\bigl(f(u^1)-f(u^2)\bigr)w\,dx\,ds,\\
			L_4
			&=
			-\Biggl[\int_\Omega
			w_tw\,dx
			\Biggr]_\tau^T.
		\end{align*}
		Using arguments similar to those used to estimate the terms $J_j$, we have
		\begin{eqnarray}\label{asymp-b}
			\int_0^T\|W(\tau)\|^2_{\mathcal{H}_s}d\tau
			\le
			2\int_0^T\|w_t(\tau)\|^2\,d\tau
			+
			C_{B,T}\sup_{t\in[0,T]}\|w(t)\|_{p+2}.
		\end{eqnarray}
		Substituting \eqref{asymp-b} into \eqref{difference-energy-6}, we obtain
		\begin{eqnarray}\label{difference-energy-8}
			T\|W(T)\|_{\mathcal H_s}^{2}
			&\le&
			2\int_{0}^{T}
			\|w_t(\tau)\|^{2}\,d\tau
			++
			C_{B,T}
			\sup_{t\in[0,T]}
			\|w(t)\|_{p+2}\nonumber\\
			&&+
			2\left|
			\int_{0}^{T}
			\int_{\tau}^{T}
			\int_{\Omega}
			\bigl(f(u^{1})-f(u^{2})\bigr)
			w_tdxdsd\tau
			\right|.
		\end{eqnarray}
		Using \eqref{global-in-time}, we have
		\begin{eqnarray}\label{asymp-c}
			2\int_{0}^{T}
			\|w_t\|^{2}\,d\tau\le2T^{q/q+1}\left(\int_0^T\|w_t\|^{2(q+1)}d\tau\right)^{1/q+1}\le T^{q/q+1}C_B.
		\end{eqnarray}
		Replacing \eqref{asymp-c} in \eqref{difference-energy-8}, we obtain
		\begin{eqnarray}\label{asymp-d}
			\|W(T)\|_{\mathcal H_s}^{2}
			&\le&\frac{C_B}{T^{1/q+1}}+
			C_{B,T}
			\sup_{t\in[0,T]}
			\|w(t)\|_{p+2}\nonumber\\
			&&+
			\frac{2}{T}\left|
			\int_{0}^{T}
			\int_{\tau}^{T}
			\int_{\Omega}
			\bigl(f(u^{1})-f(u^{2})\bigr)
			w_tdxdsd\tau
			\right|.
		\end{eqnarray}
		Using $W(t)=S_tU_0^1-S_tU_0^2$, inequality \eqref{inequality-main} follows from \eqref{asymp-d}, which completes the proof of Lemma~\ref{lemma-stability}.
	\end{proof}

	\begin{proposition}\label{prop-asymptotic}
		Under the assumptions of Lemma~\ref{lemma-stability}, the dynamical system $(\mathcal{H}_s,S_t)$ is asymptotically smooth.
	\end{proposition}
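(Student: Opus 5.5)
We will apply the criterion of Theorem~\ref{theo-asymp-smootness} with the estimate of Lemma~\ref{lemma-stability}. Fix a bounded positively invariant set $B\subset\mathcal H_s$ and $\epsilon>0$. Since $C_B/T^{1/q+1}\to0$ as $T\to\infty$, we choose $T=T_{\epsilon,B}$ so large that $C_B/T^{1/q+1}\le\epsilon^2$. Taking square roots in \eqref{inequality-main} and using $\sqrt{a+b}\le\sqrt a+\sqrt b$, we obtain
\begin{align*}
\|S_TU_0^1-S_TU_0^2\|_{\mathcal H_s}\le \epsilon+\Psi_{\epsilon,B,T}(U_0^1,U_0^2),
\end{align*}
where
\begin{align*}
\Psi_{\epsilon,B,T}(U_0^1,U_0^2):=\Bigl(C_{B,T}\sup_{s\in[0,T]}\|w(s)\|_{p+2}+\frac{2}{T}\Bigl|\int_0^T\!\!\int_\tau^T\!\!\int_\Omega\bigl(f(u^1)-f(u^2)\bigr)w_t\,dx\,ds\,d\tau\Bigr|\Bigr)^{1/2}.
\end{align*}
It then remains to check that $\liminf_m\liminf_n\Psi(U_0^n,U_0^m)=0$ for every sequence $\{U_0^n\}\subset B$.

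Let $U^n(t)=S_tU_0^n=(u^n,u^n_t)$. By \eqref{global-in-time}, $\{U^n\}$ is bounded in $L^\infty(0,T;\mathcal H_s)$, so $u^n$ is bounded in $L^\infty(0,T;H^s_0)$ and $u^n_t$ is bounded in $L^\infty(0,T;L^2)$. Since $p+2<2(p+1)\le 2^*_s$, the embedding $H^s_0(\Omega)\hookrightarrow L^{p+2}(\Omega)$ is compact for bounded Lipschitz $\Omega$ (subcritical fractional Rellich, \cite{DPV}). The Aubin--Lions--Simon lemma then gives a subsequence with $u^n\to u$ strongly in $C([0,T];L^{p+2}(\Omega))$. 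Along this subsequence we also have $u^n\rightharpoonup u$ weakly-$*$ in $L^\infty(0,T;H^s_0)$ and $u_t^n\rightharpoonup u_t$ weakly-$*$ in $L^\infty(0,T;L^2)$. Consequently $\sup_{[0,T]}\|u^n-u^m\|_{p+2}\to0$ as $n,m\to\infty$, and the first term of $\Psi$ vanishes in the iterated limit.

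The main obstacle is the nonlinear term. Writing $F$ for the primitive, we have
\begin{align*}
\int_\tau^T\!\!\int_\Omega(f(u^n)-f(u^m))(u^n_t-u^m_t)\,dx\,ds
&=\Bigl[\int_\Omega F(u^n)+F(u^m)\,dx\Bigr]_\tau^T\\
&\quad-\int_\tau^T\!\!\int_\Omega f(u^n)u^m_t\,dx\,ds-\int_\tau^T\!\!\int_\Omega f(u^m)u^n_t\,dx\,ds.
\end{align*}

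\emph{Boundary term.} By the mean value theorem, \eqref{assumption_f'} and the bound $|F(a)-F(b)|\le C(1+|a|^{p+1}+|b|^{p+1})|a-b|$, together with Hölder's inequality and the uniform $L^{2(p+1)}$ bound, we get $\int_\Omega F(u^n(t))\to\int_\Omega F(u(t))$ uniformly in $t\in[0,T]$. Here we use the strong convergence in $C([0,T];L^{p+2})$, interpolated with the $L^{2(p+1)}$ bound to obtain convergence in $L^{q}$ for any $q<2(p+1)$.

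\emph{Cross terms.} In the same way, $f(u^n)\to f(u)$ strongly in $C([0,T];L^2(\Omega))$. This uses $|f(a)-f(b)|\le C(1+|a|^p+|b|^p)|a-b|$, Hölder's inequality, and convergence in $L^{r}$ for some $r<2(p+1)$ obtained by interpolation; if $p$ is critical, we interpolate between $L^{p+2}$ convergence and the $L^{2(p+1)}$ bound with exponents chosen carefully. Taking first $n\to\infty$, the strong--weak pairing gives $\int_\tau^T\!\int_\Omega f(u^n)u^m_t\to\int_\tau^T\!\int_\Omega f(u)u^m_t$ and $\int_\tau^T\!\int_\Omega f(u^m)u^n_t\to\int_\tau^T\!\int_\Omega f(u^m)u_t$. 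Letting then $m\to\infty$, both limits converge to $\int_\tau^T\!\int_\Omega f(u)u_t=\int_\Omega F(u(T))-\int_\Omega F(u(\tau))$; this chain-rule identity is justified by the regularity $u\in W^{1,\infty}(0,T;L^2)$, $u\in L^\infty(0,T;H^s_0)$ via approximation.

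Hence the iterated limit of the inner integral is $2[\int F(u)]_\tau^T-2[\int F(u)]_\tau^T=0$, uniformly in $\tau$. Dominated convergence in $\tau$, using the uniform bounds, then kills the second term of $\Psi$. Thus $\liminf_m\liminf_n\Psi=0$, and Theorem~\ref{theo-asymp-smootness} yields asymptotic smoothness. The delicate points are the strong convergence of $f(u^n)$ in $L^2$ at critical $p$, and the justification of the chain rule for the limit $u$.
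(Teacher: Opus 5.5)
Your proof follows the paper's route. You apply Theorem~\ref{theo-asymp-smootness} to the estimate of Lemma~\ref{lemma-stability} and use Aubin--Lions for the $\sup_t\|w\|_{p+2}$ term. For the nonlinear term you split it into $\partial_t\bigl(F(u^n)+F(u^m)\bigr)$ plus cross terms, which is the Chueshov--Lasiecka argument the paper cites without writing out. Your choice $C_B/T^{1/q+1}\le\epsilon^2$, with a square root in $\Psi$, is more careful than the paper, which passes from the squared estimate \eqref{asymp-d} to an unsquared one without comment.

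\textbf{One claim is wrong, but your proof does not need it.} At the critical exponent $2(p+1)=2^*_s$, which \eqref{hip-p} permits, interpolation does not give $f(u^n)\to f(u)$ strongly in $C([0,T];L^2(\Omega))$. Hölder's inequality applied to $(1+|u^n|^p+|u|^p)|u^n-u|$ needs convergence of $u^n-u$ in $L^{2(p+1)}$ itself. Compactness only gives convergence in $L^r$ for $r<2(p+1)$. Concentrating initial data in $B$ (for instance with $f(u)=|u|^pu$, which satisfies Assumption~\ref{Assumption}) show that this strong convergence can genuinely fail at $t=0$.

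\textbf{Replace it with weak convergence.} Since $\|f(u^n(t))\|\le C\bigl(1+\|u^n(t)\|_{2(p+1)}^{p+1}\bigr)\le C_B$ and $f(u^n)\to f(u)$ a.e., you get $f(u^n)\rightharpoonup f(u)$ in $L^2((0,T)\times\Omega)$. Because the limits are iterated, only one factor moves at a time in each cross term. So weak convergence of $f(u^n)$ and of $u^n_t$ is all you actually use. The other two steps are also fine at the critical exponent:
\begin{itemize}
\item The boundary terms need only $u^n\to u$ in $C([0,T];L^2(\Omega))$ together with the $L^2$ bound on $|u^n|^{p+1}$.
\item The chain rule for the limit $u$, obtained via time mollification, uses only the local Lipschitz continuity of $f$ from $H^s_0(\Omega)$ into $L^2(\Omega)$ (Step 2 of Theorem~\ref{theo-existence}).
\end{itemize}
With this correction your argument is complete.
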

	\begin{proof}
		Given $\epsilon>0$, choose $T=T_{\epsilon,B}>0$ sufficiently large such that
		\begin{equation}\label{asymp-e}
			\frac{C_B}{T^{1/q+1}}<\epsilon.
		\end{equation}
		Define the functional
		$\Psi_T:\mathcal{H}_s\times\mathcal{H}_s\to\mathbb{R}$ by
		\begin{align*}
			\Psi_T\big(U_0^1,U_0^2\big)
			:=
			C_{B,T}
			\sup_{t\in[0,T]}
			\|w(t)\|_{p+2}+
			\frac{2}{T}
			\left|
			\int_0^T
			\int_{\tau}^T
			\int_\Omega
			\bigl(f(u^1)-f(u^2)\bigr)w_t
			dxdsd\tau
			\right|,
		\end{align*}
		where $w(t)=u^1(t)-u^2(t)$. It follows from \eqref{asymp-d} and \eqref{asymp-e} that
		\begin{align*}
			\|S_TU_0^1-S_TU_0^2\|_{\mathcal{H}_s}
			\leq
			\epsilon+
			\Psi_T\big(U_0^1,U_0^2\big),
		\end{align*}
		for all $U_0^1,U_0^2\in B$.
		
		To complete the proof, it remains to show that $\Psi_{\epsilon,B,T}$ is a contractive function. Indeed, let $\{U_0^n\}_{n\in\mathbb{N}}\subset B$. We claim that
		\begin{align*}
			\lim\inf_{n\to\infty}\lim\inf_{m\to\infty}
			\Psi_{\epsilon,B,T}(U_0^n,U_0^m)=0.
		\end{align*}
		Indeed, first, since the embedding
		$H_0^s(\Omega)\hookrightarrow L^{p+2}(\Omega)
		$
		is compact, it follows directly that
		\begin{align*}
			\lim\inf_{n\to\infty}\lim\inf_{m\to\infty}
			\left\{
			C_{B,T}
			\sup_{t\in[0,T]}
			\|u^m(t)-u^n(t)\|_{p+2}
			\right\}
			=0.
		\end{align*}
		Moreover, combining the boundedness and positive invariance of $B$ with the Aubin--Lions compactness theorem and \cite[Step 2, Lemma 3.38]{chueshov-white}, we obtain
		\begin{align*}
			\lim\inf_{n\to\infty}\lim\inf_{m\to\infty}
			\left\{
			\frac{2}{T}
			\left|
			\int_0^T
			\int_{\tau}^T
			\int_\Omega
			\bigl(f(u^m)-f(u^n)\bigr)
			\bigl(u_t^m-u_t^n\bigr)
			dxdsd\tau
			\right|
			\right\}
			=0.
		\end{align*}
		Consequently, $\Psi_{\epsilon,B,T}$ is a contractive function. Therefore, the asymptotic smoothness of the dynamical system follows from Theorem~\ref{theo-asymp-smootness}.
		
	\end{proof}

	\begin{proposition}\label{Prop-stationary-set}
		Let $\Phi$ be the Lyapunov function given in Proposition \ref{prop-gradient-system}. Then $\Phi(U)$ is bounded from above on any bounded subset of $\mathcal{H}_s$ and the set $\Phi_R=\{U:\Phi(U)\le R\}$ is bounded for every $R$.
	\end{proposition}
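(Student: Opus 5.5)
The plan is to treat the two claims separately. Both will come from two-sided estimates on $\Phi=E$ in terms of $\|U\|_{\mathcal H_s}^2$. The lower bound is already available as \eqref{global-in-time1}, and only an upper bound on bounded sets remains to be proved.

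\emph{Upper bound on bounded sets.} Let $\mathcal B\subset\mathcal H_s$ be bounded, say $\|U\|_{\mathcal H_s}\le R$ for $U=(u,v)\in\mathcal B$. In the definition \eqref{energy-formula}, the quadratic part is at most $\frac12 R^2$. The forcing term obeys $|\int_\Omega hu\,dx|\le K\|h\|R$ by the Poincaré inequality \eqref{poincare}. The main point is the term $\int_\Omega F(u)\,dx$.

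\begin{itemize}
\item Integrating \eqref{assumption_f'} twice (using $F(0)=0$ and $|f(r)|\le |f(0)|+C_{f'}(|r|+|r|^{p+1}/(p+1))$) gives $|F(r)|\le C(1+|r|^{p+2})$.
\item Since $p+2\le 2(p+1)$ and $\Omega$ is bounded, the embedding $H^s_0(\Omega)\hookrightarrow L^{2(p+1)}(\Omega)\hookrightarrow L^{p+2}(\Omega)$ from Assumption \ref{Assumption} yields $\int_\Omega F(u)\,dx\le C(|\Omega|+\|u\|_{H^s_0}^{p+2})\le C(1+R^{p+2})$.
\end{itemize}

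Hence $\Phi(U)\le C_R$ on $\mathcal B$. The only point needing care is that the embedding exponent covers $p+2$, and it does because $p+2\le 2(p+1)$.

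\emph{Boundedness of sublevel sets.} Take $U\in\Phi_R$. Estimate \eqref{global-in-time1} was derived pointwise in $t$ from \eqref{assumption_f}, \eqref{poincare} and Young's inequality, and these did not use the dynamics, so it holds for any $U\in\mathcal H_s$:
\[
\Phi(U)\ge \frac{\kappa}{4}\|U\|_{\mathcal H_s}^2-\kappa_0, \qquad \kappa=1-c_fK^2>0 .
\]
Therefore $\|U\|_{\mathcal H_s}^2\le \frac{4}{\kappa}(R+\kappa_0)$ on $\Phi_R$, so $\Phi_R$ is bounded; if $R<-\kappa_0$, then $\Phi_R$ is empty.

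I expect no genuine obstacle. The only steps to state carefully are the polynomial bound on $F$ with exponent compatible with the fractional Sobolev embedding, and the remark that \eqref{global-in-time1} is a static inequality valid for all $U\in\mathcal H_s$, not only along trajectories.
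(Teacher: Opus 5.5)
Your proof is correct. The sublevel-set half is the paper's argument: the paper also reads off $\|U\|_{\mathcal H_s}^2\le \frac{4}{\kappa}(\kappa_0+R)$ from \eqref{global-in-time1}. The paper writes that inequality along a trajectory $U(t)$. Your remark that it is a static inequality, valid for every $U\in\mathcal H_s$, is the correct way to read it.

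The upper-bound half follows a different and more complete route. The paper invokes the energy identity $\Phi(U(t))+\int_0^t\|U\|_{\mathcal H_s}^{2q}\|u_t\|^2\,ds=\Phi(U_0)$. It then simply asserts $E(U_0)\le C_B$ for $U_0$ in a bounded set, without justification.

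That identity only gives monotonicity of $\Phi$ along orbits, so it is not needed for a statement about $\Phi$ as a functional on $\mathcal H_s$. What the claim actually requires is the bound you prove. It combines $|F(r)|\le C(1+|r|^{p+2})$, obtained from \eqref{assumption_f'} and $F(0)=0$, with the chain of embeddings $H^s_0(\Omega)\hookrightarrow L^{2(p+1)}(\Omega)\hookrightarrow L^{p+2}(\Omega)$.

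You were right to use the growth condition \eqref{assumption_f'} here. The upper inequality in \eqref{assumption_f} bounds $F(u)$ only by $f(u)u$, which is itself uncontrolled without that condition.

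Your argument is therefore the more direct one. It supplies the step the paper leaves implicit.
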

	\begin{proof}
		Let $B$ be bounded set in $\mathcal{H}_s$. Let $U(t)=S_tU_0$ be a mild solution of the problem (\ref{P}) such that $U_0\in B$.
		Multiplying the equation \eqref{P} by $u_t$ and integrating $\Omega\times [0,t]$, we obtain
		\begin{align*}\Phi(U(t))+\int_0^t\|U\|^{2q}_{\mathcal{H}_s}\|u_t\|^2ds=\Phi(U(0)),\quad t\ge 0.
		\end{align*}
		Since $\Phi(U_0)=E(U_0)\le C_B$, it follows directly from the above identity that $\Phi(U)=E(U)$ is bounded from above on any bounded subset $B$ of $\mathcal{H}_s$.
		
		Now, we define $\Phi_R=\{U\in \mathcal{H}_s:\Phi(U)\le R\}$. From \eqref{global-in-time1}
		\begin{align*}
			||U(t)||^2_{\mathcal{H}_s}\le\frac{4}{\kappa}\left[\kappa_0+\Phi(U(t))\right]\le \frac{4}{\kappa}\left[\kappa_0+R\right]=:C_R,
		\end{align*}
		then $\Phi_R$ is bounded in $\mathcal{H}_s$.
	\end{proof}
	
	\begin{proposition}\label{Prop-stationary-set-2}
		The set of stationary solutions of problem \eqref{P} is bounded in
		$\mathcal{H}_s$.
	\end{proposition}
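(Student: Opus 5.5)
A stationary point $U=(u,v)\in\mathcal N$ satisfies $S_tU=U$ for all $t\ge0$, so $u_t\equiv0$ and $v=0$. Hence every stationary point has the form $U=(u,0)$ with $u\in H^s_0(\Omega)$ a weak solution of the elliptic problem
\begin{equation*}
(-\Delta)^s u+f(u)=h \quad\text{in }\Omega,\qquad u=0 \text{ on }\partial\Omega.
\end{equation*}
Indeed, the damping term $\gamma\|U\|^{2q}_{\mathcal H_s}u_t$ vanishes because $u_t=0$. The weak formulation reads
\begin{equation*}
(u,\varphi)_{H^s_0(\Omega)}+\int_\Omega f(u)\varphi\,dx=\int_\Omega h\varphi\,dx\quad\text{for all }\varphi\in H^s_0(\Omega).
\end{equation*}
This follows by passing to the limit in the mild formulation, or from the identity $(v,u)_{H^s_0}=((-\Delta)^su,v)$ established in Step 1 of Theorem~\ref{theo-existence}. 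Since $H^s_0\hookrightarrow L^{2(p+1)}$ and $|f(u)|\le C(1+|u|^{p+1})$, we have $f(u)\in L^2(\Omega)$, so the test function $\varphi=u$ is admissible.

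Taking $\varphi=u$ gives
\begin{equation*}
\|u\|^2_{H^s_0(\Omega)}+\int_\Omega f(u)u\,dx=\int_\Omega hu\,dx .
\end{equation*}
The key step is a lower bound for $\int_\Omega f(u)u\,dx$ from \eqref{assumption_f}. Combining both sides of that assumption gives
\begin{equation*}
f(u)u\ge F(u)-\tfrac{c_f}{4}|u|^2\ge -C_f-\tfrac{3c_f}{4}|u|^2 .
\end{equation*}
With the Poincaré inequality \eqref{poincare}, this yields
\begin{equation*}
\int_\Omega f(u)u\,dx\ge -C_f|\Omega|-\tfrac{3c_fK^2}{4}\|u\|^2_{H^s_0(\Omega)} .
\end{equation*}
Since $c_fK^2<1$, the coefficient $1-\tfrac34c_fK^2\ge \kappa$ is positive. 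For the forcing term we use the Young-type estimate \eqref{global_b}, with the small parameter adjusted to $\kappa/2$ instead of $\kappa/4$:
\begin{equation*}
\int_\Omega hu\,dx\le \|h\|K\|u\|_{H^s_0(\Omega)}\le \tfrac{K^2}{2\kappa}\|h\|^2+\tfrac{\kappa}{2}\|u\|^2_{H^s_0(\Omega)} .
\end{equation*}

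Absorbing terms then gives
\begin{equation*}
\tfrac{\kappa}{2}\|u\|^2_{H^s_0(\Omega)}\le C_f|\Omega|+\tfrac{K^2}{2\kappa}\|h\|^2 .
\end{equation*}
This bound is uniform over $\mathcal N$, so $\|U\|_{\mathcal H_s}=\|u\|_{H^s_0(\Omega)}\le C$ for all $U\in\mathcal N$, with $C$ depending only on $C_f,|\Omega|,K,\kappa,\|h\|$.

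The only delicate point is the bookkeeping of constants in the lower bound for $f(u)u$. One must check that the combined inequality in \eqref{assumption_f} leaves a strictly positive coefficient in front of $\|u\|^2_{H^s_0}$; this holds precisely because $c_f<1/K^2$. The rest is routine. If an alternative route is preferred, we can instead use Proposition~\ref{Prop-stationary-set}. Along a stationary trajectory $\Phi(U)$ equals $\Phi$ at the initial datum, but this gives no uniform bound without controlling $\Phi$ on $\mathcal N$, so the direct elliptic estimate above is the approach to follow.
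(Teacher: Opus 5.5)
Your proposal is correct and follows essentially the same route as the paper. You test the stationary equation with $u$, combine the two sides of \eqref{assumption_f} to get $f(u)u\ge -C_f-\frac{3c_f}{4}|u|^2$, and then absorb terms via Poincar\'e and Young. The only difference is that you take the Young parameter $\kappa/2$ where the paper takes $1/8$, which changes only the final constant; your added justification that stationary points have the form $(u,0)$ with $f(u)\in L^2(\Omega)$ fills in details the paper leaves implicit.
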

	\begin{proof}
		Let $u$ be a stationary solution of problem \eqref{P}. Then $u$
		satisfies
		$$
		(-\Delta)^s u+f(u)=h.
		$$
		Testing the above equation with $u$, we obtain
		$$
		\|u\|_{H_0^s(\Omega)}^2
		=
		\int_\Omega h u\,dx
		-
		\int_\Omega f(u)u\,dx.
		$$
		By assumption \eqref{assumption_f} and Poincar\'e inequality, we have
		\begin{align*}
			\|u\|_{H_0^s(\Omega)}^2
			&\le
			\|h\|\|u\|
			+C_f|\Omega|
			+\frac{3}{4}c_f\|u\|^2\\
			&\le
			K\|h\|\|u\|_{H_0^s(\Omega)}
			+\frac{3}{4}K^2c_f\|u\|_{H_0^s(\Omega)}^2
			+C_f|\Omega|.
		\end{align*}
		Applying Young's inequality,
		$$
		K\|h\|\|u\|_{H_0^s(\Omega)}
		\le
		2K^2\|h\|^2
		+\frac{1}{8}\|u\|_{H_0^s(\Omega)}^2,
		$$
		we deduce that
		$$
		\left(
		1-\frac{1}{8}-\frac{3}{4}K^2c_f
		\right)
		\|u\|_{H_0^s(\Omega)}^2
		\le
		2K^2\|h\|^2
		+C_f|\Omega|.
		$$
		Since $c_f<1/K^2$, we have
		$$
		1-\frac{1}{8}-\frac{3}{4}K^2c_f
		>
		\frac{1}{8},
		$$
		and therefore
		\[
		\|u\|_{H_0^s(\Omega)}^2
		\le
		8\left(
		2K^2\|h\|^2
		+C_f|\Omega|
		\right).
		\]
		Since every stationary solution is of the form $(u,0)$, it follows that
		the set $\mathcal{N}$ is bounded in $\mathcal{H}_s$.
	\end{proof}
	
	\paragraph{Conflict of interest} On behalf of all authors, the corresponding author states that there is no conflict of interest. 
	
	\paragraph{Author contributions}
	All authors contributed to the study conception and design. 
	
	\paragraph{Data availability} 
	Data sharing not applicable to this article as no datasets were generated or analysed during the current study.

\end{document}